\renewcommand\cite{\citet}
\newtheorem{lemma}{Lemma}[section]
\newtheorem{theorem}{Theorem}[section]
\newtheorem{corollaryT}{Corollary}[section]
\theoremstyle{remark}
\newtheorem{remark}{Remark}[section]
\theoremstyle{newremark}
\numberwithin{newremark}{section}
\numberwithin{equation}{section}
\newcommand*{\Scale}[2][4]{\scalebox{#1}{$#2$}}
\def\KL{\operatorname{KL}}
\newcommand{\supp}{\operatorname{supp}}
\def \Ps{\P^{*}}
\def \zv{z}
\def \Zy{Z}
\def \Uy{U}
\def \Sigmaz {\Sigma_{\zv}}
\def \Sigmazi{\Sigma_{\zv,i}}
\newcommand{\sbt}{\,\begin{picture}(0,1)
                    \put(1,3){\circle{2}}
                    \put(1,3){\circle{3}}
                    \end{picture}\ }
\newcommand{\sbtt}{\,\begin{picture}(0,1)
                    \put(1,2){\circle{2}}
                    \put(1,2){\circle{3}}
                    \end{picture}\ }
\def\Lb{L^{\sbt}}
\def\Eb{\E^{\sbt}}
\def\Pb{\P^{\sbt}}
\def \BCl{\mathscr{B}}
\def \HCl{\mathscr{H}}
\def \Deltab{\Delta^{\sbt}}
\def\Xs{\mathcal{X}_{0}}
\def \Id {\boldsymbol{I}}
\def\thetavt{\tilde{\thetav}}
\def\thetavbt{\tilde{\thetav}^{\sbtt}}
\newcommand{\cc}[1]{\mathscr{#1}}
\newcommand{\bb}[1]{\boldsymbol{#1}}
\renewcommand{\(}{$\,}
\renewcommand{\)}{\,$}
\def\argmax{\operatornamewithlimits{argmax}}
\def\argmin{\operatornamewithlimits{argmin}}
\def \xx {x}
\def\thetas{\theta^{\ast}}
\def \thetavs{\thetas}
\def \thetav{\theta}
\def \thetas{\theta_{0}}
\def \veps{\varepsilon}
\def \vepst{\varepsilon}
\def \epsc{\varepsilon}
\def \ex{e}
\def\Var{\operatorname{Var}}
\def\T{\top}
\def\LL{\cc{L}}
\def\cond{\, \big| \,}
\def\R{\mathbb{R}}
\def\E{\mathbf{E}\hspace{0.01cm}}
\def\P{\mathbf{P}}
\def \Id{\mathbf{I}}
\def \Ind{\mathbbm{1}}
\def \dimp{p}
\def\CS{\cc{E}}
\def\gmu{\mathfrak{a}}
\def \ys{\bb{y}}
\def \SPDp {S_{\dimp}^{\Scale[0.65]{+}}}
\DeclareMathOperator{\vect}{vec}
\def \Sb {S^{\sbt}}
\def \Xb {X^{\sbt}}
\def \QuantL {Q_{L}}
\def \Quantb{Q^{\sbt}}
\def \Quants{Q^{\ast}}
\def \QuantLb {Q^{\sbt}_{L}}
\def \St{\tilde{S}}
\def \cKS {C_{\Sigma,K}}
\def \iv{\bm{i}}
\def \Cz{C_{\zv}}
\def \Cx{C_{X}}
\def \Czxiv{C_{\zv,L}}
\def \CzL{\bar{C}_{\zv}}
\def\SigmazL{\bar{\Sigma}_{\zv}}
\def \Xs{X^{\ast}}
\def \Ps{\Pb}
\def \Es{\Eb}
\def \Ss {S^{\ast}}
\def \Xbar{\bar{X}}
\def \Xbars{\bar{X}^{\ast}}
\def \Deltab{\Delta^{\sbt}}
\def \Deltas{\Delta^{\ast}}
\def \Xmean{\bar{X}}
\def \Xs{X^{\ast}}
\def \Pb{\P^{\ast}}
\def \Eb{\E^{\ast}}
\def \Ss{S^{\ast}}
\def\eqdef{\coloneqq}
\def \bmC{C}
\def \CBEBiid{\bmC_{\BCl,iid}}
\def \CBEBniid{\bmC_{\BCl,ind}}
\def \CBEBniidw{\bmC_{\BCl,w,ind}}
\definecolor{brown(traditional)}{rgb}{0.59, 0.29, 0.0}
\definecolor{hookersgreen}{rgb}{0.0, 0.44, 0.0}
\definecolor{magenta(process)}{rgb}{1.0, 0.0, 0.56}
\definecolor{folly}{rgb}{1.0, 0.0, 0.31}
\definecolor{brightgreen}{rgb}{0.4, 1.0, 0.0}
\newcommand{\labitem}[2]{%
\def\@itemlabel{\textbf{#1}}
\item
\def\@currentlabel{#1}\label{#2}}
\begin{document}

\begin{frontmatter}

\title{Nonclassical Berry--Esseen inequalities and accuracy of the bootstrap}

\runtitle{Berry--Esseen inequalities and accuracy of bootstrap}

 \author{\fnms{Mayya} \snm{Zhilova}\thanksref{a1}
 \corref{}\ead[label=e1]{mzhilova@math.gatech.edu}}
\thankstext{a1}{Support by the National Science Foundation grant DMS-1712990 is gratefully acknowledged} 
\address{School of Mathematics\\Georgia Institute of Technology\\
Atlanta, GA 30332-0160 USA \\\printead{e1}}

\runauthor{M. Zhilova}

\affiliation{School of Mathematics\\Georgia Institute of Technology}
\begin{abstract}
We study accuracy of bootstrap procedures for estimation of quantiles of a smooth function of a sum of independent sub-Gaussian random vectors. We establish higher-order approximation bounds with error terms depending on a sample size and a dimension explicitly. These results  lead to improvements of  accuracy of a weighted bootstrap procedure for general log-likelihood ratio statistics. The key element of our proofs of the bootstrap accuracy is a multivariate higher-order Berry--Esseen inequality. We consider a problem of approximation of  distributions of two sums of zero mean independent random vectors, such that summands with the same indices have equal moments up to at least the second order. The derived approximation bound is uniform on the sets of all Euclidean balls. The presented approach extends classical Berry--Esseen type inequalities to higher-order approximation bounds. The theoretical results are illustrated with numerical experiments.
\end{abstract}
\begin{keyword}[class=MSC]
\kwd[Primary ]{62E17}
\kwd{62F40}
\kwd[; secondary ]{62F25}
\end{keyword}

\begin{keyword}
\kwd{Multivariate Berry--Esseen inequality, dependence on dimension, Efron's bootstrap, weighted bootstrap, multiplier bootstrap, higher-order inference, 
smooth function model, likelihood-based confidence sets}
\end{keyword}

\end{frontmatter}

\section{Introduction}
In this paper we study accuracy of bootstrap procedures for estimation of quantiles of statistics of the form \(\|S_n\|\) or $f(S_{n})$, where $\| \cdot\|$ denotes the $\ell_2$-norm, and $f(\cdot): \R^{\dimp}\mapsto \R$ is a twice continuously differentiable function with bounded second derivative,  
\begin{equation*}
S_n\eqdef n^{-1/2}{{\textstyle\sum\nolimits_{i=1}^{n}}}X_i
\end{equation*}
for independent sub-Gaussian random vectors $X_1,\dots, X_n \in \R^{\dimp}$ with positive definite covariance matrices $\Var (X_{i})$ $\forall\,i=1,\dots,n$. 
We consider the non-asymptotic setting, when the leading approximation errors depend on $n$ and the dimension $\dimp$ explicitly. This setting allows to assess accuracy and limitations of a bootstrap approximation in terms of the dimension $\dimp$ and the sample size $n$. 
 Estimation of distribution of statistics of the types \(\|S_n\|\) or $f(S_{n})$ is necessary for construction of confidence sets and hypothesis testing in some important statistical models and problems, such as linear regression model with unknown distribution of errors,  general log-likelihood ratio statistic, construction of confidence sets for multivariate sample mean. 

We focus on two basic bootstrapping procedures. The first method considered here is the Efron's bootstrap (introduced by \cite{Efron1979bootstrap} in 1979), where the resampling is performed uniformly at random with replacement from the i.i.d. data $X_{1},\dots,X_{n} \in \R^{\dimp}$. In this case the bootstrap samples $\Xs_{1},\dots,\Xs_{n}$ have the distribution $\Pb(\Xs_{j}=X_{i}-\Xmean)=1/n\ \forall i,j=1,\dots,n$, where $\Xmean=n^{-1}\sum_{i=1}^{n}X_{i}$, and \(\Pb(\cdot)\eqdef\P(\cdot\cond X_{1},\dots,X_{n})\). Define for the  sum \(S_{n}\) its bootstrap version:
\begin{equation*}
\Ss_{n}\eqdef  n^{-1/2}{{\textstyle\sum\nolimits_{i=1}^{n}}}\Xs_{i}.
\end{equation*}
One of the main results of the paper is the following uniform approximation bound on the set $\BCl$ of all Euclidean balls in $\R^{\dimp}$ which holds with high probability: 
\begin{align}
\label{intro:bootstappr1}
\sup\nolimits_{B\in \BCl}\left|
 \P\left(S_{n}\in B \right)
-
\Pb\bigl(\Ss_{n}\in B \bigl)
\right|&\leq C_{\sigma,K,z} \bigl\{\dimp^{{K}/{(K-2)}}/{n}\bigr\}^{1/2},
\end{align}
where $K\geq 3$ is a natural number, and constant $C_{\sigma,K,z}$ depends (up to $\log$-terms) on $K$, on value $\Cz$ introduced in  \eqref{def:Cz_mainres} in Section \ref{SECT:MAINRES_BE}, and on constant $\sigma^{2}>0$ which comes from the following condition on the moment generating function of $X_{i}$: 
$\E\bigl\{\exp(\alpha^{\T}X_{i})\bigr\}\leq
\exp \left(\|\alpha\|^{2}\sigma^{2}/2\right)\  \forall \alpha\in \R^{\dimp}$
(see also Remark \ref{remark:thb1} in Section \ref{sect:someremarks} for an asymptotic version of the statement).

The second  of the considered methods is the weighted bootstrap. Here $X_{1}, \dots, X_{n} \in \R^{\dimp}$ are assumed to be zero mean, independent but not necessarily identically distributed. Introduce the following random variables
\begin{equation}
\label{def:wb_eps_intro}
\begin{gathered}
\text{\(\veps_{1},\dots,\veps_{n}\in \R\), i.i.d., independent of \(\{X_{i}\}_{i=1}^{n}\)},\\
\E\veps_{i}=0,\ 
\E(\veps_{i}^{2})=1,\ \E(\veps_{i}^{3})=1,\ 
\E(\veps_{i}^{4})<\infty.
\end{gathered}
\end{equation}
The weighted or the multiplier bootstrap approximation of \(S_{n}\) is:
\begin{equation}
\label{def:SbnW}
\Sb_{n}\eqdef  n^{-1/2}{{\textstyle\sum\nolimits_{i=1}^{n}}}X_{i} \veps_{i}.
\end{equation}
For this version of the bootstrap estimator we derive the following bound which holds with high probability: 
\begin{align}
\label{intro:bootstapprw}
\sup\nolimits_{B\in \BCl}\left|
 \P\left(S_{n}\in B \right)
-
\Pb\bigl(\Sb_{n}\in B \bigl)
\right|& \leq C_{\sigma,z}(\dimp^{2}/{n})^{1/6}
\end{align}
for  $\dimp \leq C \sqrt{n}$, where constant $C_{\sigma,z}$ depends on value $\CzL$ introduced in \eqref{def:Sigmazi} in Section \ref{SECT:MAINRES_BE}, 
 and on constants $\sigma_{i}^{2}>0$ which come from conditions on m.g.f.-s of $X_{i}$  for each $i=1,\dots, n$ (an asymptotic version of the statement is given in Remark \ref{remark:thb1}, Section \ref{sect:someremarks}). 
Bounds \eqref{intro:bootstappr1} and \eqref{intro:bootstapprw} imply, in particular, that if the random vector $X_{1}$ is sub-Gaussian, and the ratio $\dimp^{{K}/{(K-2)}}/{n}$ (or $\dimp^{2}/n$ for bound \eqref{intro:bootstapprw}) is rather small, then the bootstrap approximation is accurate.  
In addition, we give an example of $\{X_{i}\}_{i=1}^{n}$ which justifies that the condition $\dimp=o(n)$ (or $\dimp=o(n^{1/2})$ for the weighted bootstrap) as $n\to \infty$ is necessary for the consistency result $\sup_{B\in \BCl}\bigl|
 \P\left(S_{n}\in B \right)
-
\Pb\bigl(\Ss_{n}\in B \bigl)
\bigl|\overset{\P}{\to} 0$ (or $\sup_{B\in \BCl}\bigl|
 \P\left(S_{n}\in B \right)
-
\Pb\bigl(\Sb_{n}\in B \bigl)
\bigr|\overset{\P}{\to} 0$
for the weighted bootstrap method) as $n\to \infty$.

An important feature of the present results is that they do not involve any asymptotic  methods such as, for example, Edgeworth expansions that are frequently employed for studying the rates of convergence of bootstrap estimators. We develop a new non-asymptotic approach that allows to study higher-order accuracy of bootstrap in high-dimensional setting. The key element in the proofs of our theoretical results about bootstrapping is a multivariate Berry--Esseen inequality in a nonclassical form which might be interesting by itself. 

  We consider the problem of approximation of a probability distribution of the sum $S_n=n^{-1/2}\sum\nolimits_{i=1}^{n}X_i$, where  $X_i\in\R^{\dimp}$ are independent random vectors such that $\E X_i =0$ and $\E (\|X_i\|^K)<\infty$ for some $K\geq 3$. The approximating distribution corresponds to the sum 
$\St_{n}\eqdef n^{-1/2}\sum\nolimits_{i=1}^{n} Y_{i}$, 
where $Y_1,\dots, Y_n\in\R^{\dimp}$ are independent random vectors,  independent of \(\{X_{i}\}_{i=1}^{n}\)  such that  $\E (\|Y_i\|^K)<\infty$, 
\begin{gather}
\label{cond:XiYi}
\E (X_{i}^{k})=\E (Y_{i}^{k})\ \forall k=1,\dots,K-1,
\end{gather}
and  \(Y_i {=}\Zy_i+\Uy_i\) for some independent random vectors \(\Zy_{i},\Uy_{i}\in\R^{\dimp}\), where \( \Zy_{i}\) are normally distributed with \(\E\Zy_{i}=0\). Throughout the paper  the condition $\E \bigl(X^{k}\bigr)=\E \bigl(Y^{k}\bigr)\  \forall\,k=1,\dots,K$ on the higher-order moments of random vectors $X=(x_{1},\dots,x_{\dimp})^{\T}\in\R^{\dimp}$ and  $Y=(y_{1},\dots,y_{\dimp})^{\T}\in\R^{\dimp}$ denotes that for all degrees $k=1,\dots,K$ and for all indices $1\leq i_{1},\dots, i_{k}\leq \dimp$ 
\begin{gather}
\label{eq:mom}
\E(x_{i_1} \dots  x_{i_k})=\E(y_{i_1} \dots  y_{i_k}).
\end{gather}
In Lemma \ref{lemma:Yexist} we show that if a cardinality of a support of  \(X_{i}\) is sufficiently large, then  
the corresponding random vectors \(\Zy_{i},\Uy_{i}\) always exist. The probability distribution of such constructed random vector $\St_{n}$  turns out to be a rather good approximation of a distribution of the initial sum $S_{n}$. One of the main results in the paper is the following uniform Berry--Esseen type bound:
 for the set $\BCl$ of all Euclidean balls in $\R^{\dimp}$ and for i.i.d. $\{X_{i}\}_{i=1}^{n}$
\begin{align}
\nonumber
&
\hspace{-2cm}
\sup\nolimits_{B \in \BCl}
\bigl|
\P(S_{n}\in B)-\P(\St_{n}\in B)
\bigr|
\\
&\leq
 \cKS
 { \left\{\E\left(\|X_{1}\|^{K}+\|Y_{1}\|^{K}\right)\right\}^{{1}/{(K-2)}}}{n^{-1/2}},
 \label{ineq:BEintro}
 \end{align}
where constant $\cKS$ depends on $K$ and on eigenvalues of $(\Var Z_{1})^{-1}.$  
Bound  \eqref{ineq:BEintro} includes the classical Berry--Esseen inequality, where the approximating distribution is  multivariate normal i.e. $Y_{i}\sim \mathcal{N}(0,\Var X_{i})$ and $K=3$. If $K>3$, this bound  exploits more information about coinciding moments, than the normal approximation does, which leads to a better accuracy. 

Our proof of bound \eqref{ineq:BEintro} is based on the work of \cite{Bentkus2003BE}, where the author obtained a multivariate Berry--Esseen inequality involving the standard normal distribution, uniformly on the set of all Euclidean balls, and also on the set of all convex sets in $\R^{\dimp}$. 
 In this paper we extend the proof in the work of \cite{Bentkus2003BE} to the \enquote{quasi-normal} case, i.e. for the approximation with the sum $\St_{n}$ of the convolutions \(Y_{i}=\Zy_{i}+\Uy_{i}\), where \(Z_{i}\) are normally distributed. This approach allows us to use both the properties of the normal distribution and the higher moments condition \eqref{cond:XiYi}. 
 Furthermore, if $\|X_{1}\|^{2}\leq \dimp$ a.s., then inequality \eqref{ineq:BEintro} implies
 $$
 \sup\nolimits_{B \in \BCl}
\bigl|
\P(S_{n}\in B)-\P(\St_{n}\in B)
\bigr|\leq  \cKS \bigl\{\dimp^{{K}/{(K-2)}}/{n}\bigr\}^{1/2}.
 $$
In Lemma \ref{lemma:BElowerbound} in Section \ref{SECT:MAINRES_BE}, we show that for $K\geq 3$ the requirement $\dimp=o(n^{(K-2)/K})$ as $n\to \infty$ is necessary for $\sup_{x\in\R}|\P(\|S_{n}\|\leq x) -\P(\|\tilde{S}_{n}\|\leq x) | \to 0$, $n\to \infty$ for some approximating distribution $\tilde{S}_{n}$, satisfying conditions of Theorem \ref{theor:NcBE}.

Now let us discuss how Berry--Esseen type bound \eqref{ineq:BEintro} leads to the results \eqref{intro:bootstappr1}, \eqref{intro:bootstapprw} about bootstrap. In the framework of the Efron's bootstrapping scheme, condition \eqref{cond:XiYi} is modified with concentration bounds for the higher-order bootstrap moments (equal to the empirical moments) $\Eb ({\Xs_{i}}^{k})= n^{-1}\sum_{i=1}^{n}(X_{i}-\bar{X})^{k}$ for $k=2,\dots,K-1$, where $\Eb(\cdot)\eqdef\E(\cdot\cond X_{1},\dots,X_{n})$. For the case of the weighted bootstrap, condition \eqref{def:wb_eps_intro}  implies $\Eb ({\Xb_{i}}^{k}) =X_{i}^{k}\E(\veps_{i}^{k})=X_{i}^{k}$, $k=2,3$. In this way, the concentration properties of the empirical moments around the theoretical ones together with  the higher-order Berry--Esseen bounds of the form \eqref{ineq:BEintro} determine accuracy of the bootstrap procedures.
 Let us emphasize that the considered higher-order approximations play a key role for obtaining the improved accuracy of bootstrap procedures in terms of the ratio of $\dimp$ and $n$. For example, consider the weighted bootstrap procedure with a simplified condition on the random weights $\veps_{i}$. If $\E\veps_{i}=0$ and $\E(\veps_{i}^{2})=1$, then
 \begin{gather} 
\label{ineq:sumsmom12}
\E S_{n} =\E \Sb_{n}, \quad \E (S_{n}S_{n}^{\T}) =\E (\Sb_{n}{\Sb_{n}}^{\T}).
 \end{gather}
Using \eqref{ineq:sumsmom12} and a normal approximation between probability distributions of $\|S_{n}\|$ and $\|\Sb_{n}\|$ (e.g. the results of \cite{Bentkus2003BE}, or the inequalities by \cite{SpZh2014PMB} for the non-i.i.d. case), one can obtain an approximation bound similar to \eqref{intro:bootstapprw}, with an error term $\left({C_{\Sigma}^{3}
{\E\left(\|X_{1}\|^{3}\right)}
 }/{\sqrt{n}}\right)^{1/4}$ which is less sharp than \eqref{intro:bootstapprw} in the ratio between $\dimp$ and $n$.
 Using also the condition $\E(\veps_{i}^{3})=1$, we obtain
\begin{gather}
  \forall \alpha\in \R^{\dimp}\  \E \{(\alpha^{\T} S_{n})^{3}\} =\E \{(\alpha^{\T}\Sb_{n})^{3}\},
  \label{ineq:sumsmom3}
 \end{gather}
 and this property leads to the improved error term in \eqref{intro:bootstapprw}.   In order to employ the information about the third moments, as in  \eqref{ineq:sumsmom3}, one needs to use an approximation scheme that is more general than the normal approximation. For this purpose we establish the multivariate higher-order Berry--Esseen inequalities (Section \ref{SECT:MAINRES_BE}).
  
The methods introduced in the paper allow to consider an important and a more general model, namely, the Smooth Function Model introduced by 
\cite{Bhattacharya1978validity} and \cite{Hall1992bootstbook} (Chapter 2.4). In this model, the object of interest is $f(\mu)$, where $f:\R^{\dimp}\mapsto \R$ is a smooth function and $\mu$ is an unknown expected value if $X_{i}$. The bootstrap estimators allow to approximate $f(\Xbar)-f(\mu)$ in distribution, and,  therefore, to establish a confidence set for $f(\mu)$. This also includes the case, when one aims at constructing a confidence set for $\mu$ in the form $f(\Xbar-\mu)$. In Section \ref{SECT:MAINRES_BOOTSTR} we establish the approximation bounds similar to  \eqref{intro:bootstappr1} and  \eqref{intro:bootstapprw} for the Smooth Function Model. 

  The weighted or the multiplier bootstrap procedure is useful in the situations, 
  when it is required to resample a solution of estimating equations,   or a maximum likelihood estimator, or in the case when the random summands $\{X_{i}\}_{i=1}^{n}$ are not necessarily identically distributed (see, e.g., \cite{Mammen1993bootstrap,ChattBose2005generalized}). 
 The present results for the weighted bootstrap lead also to an improvement of accuracy of a weighted bootstrap procedure for general log-likelihood ratio statistics under possible model misspecification.  \cite{SpZh2014PMB} considered the weighted bootstrap for estimation of quantiles of a log-likelihood ratio, they showed that if a  parametric model is not severely misspecified, then 
the accuracy of bootstrap log-likelihood ratio quantiles corresponds to the accuracy of the normal approximation between statistics of the type $\|S_{n}\|$ and  $\|\Sb_{n}\|$.
Using inequality \eqref{intro:bootstapprw}, we infer that the accuracy of the weighted bootstrap for log-likelihood ratio depends rather on accuracy of the Wilks-type bounds, than on the normal approximation. We employ this result for construction of likelihood-based confidence sets.

Below we give an overview of the existing literature about bootstrap accuracy. Resampling methods are widely used for statistical inference in various applications. The bootstrap is well-known for its good performance in the situations when the amount of data is small (see, e.g. \cite{Horowitz2001bootstrapHandbook}), however, there are relatively few results about accuracy of the bootstrap in a non-asymptotic set-up. Most of the existing results are quite recent. \cite{ArlotBlanch2010} studied generalized weighted bootstrap for construction of non-asymptotic confidence bounds in $\ell_{r}$ -norm ($r \in [1, \infty] $) for the mean value of high-dimensional random vectors  with a symmetric and bounded (or with the normal) distribution. \cite{ChernoMultBoot} 
established Gaussian approximation results, as well as accuracy of the multiplier and the Efron's bootstrap for maxima of sums of high-dimensional vectors in a very general set-up. \cite{Chernozhukov.et.al.(2014a)}  extended the results from maxima to general hyperrectangles and sparsely convex sets.  The results of \cite{ChernoMultBoot,Chernozhukov.et.al.(2014a)} 
 allow the dimension $\dimp$ grow as $O(\exp(C n^{c}))$ for some constants $c,C>0$.
\cite{SpZh2014PMB} considered the multiplier bootstrap for estimation of quantiles of a general log-likelihood ratio under model misspecification. \cite{Zh2015UMB} extended this methodology for the simultaneous likelihood-based inference in the case of exponentially large number of models.

In the asymptotic high-dimensional setting when both the parameter dimension $\dimp$ and the sample size $n$ are large,  \cite{BickelFreedman1983bootstrapping,Mammen1989asymptotics,Mammen1993bootstrap} studied accuracy of the Efron's and the wild bootstrap for the linear regression model and for M-estimators; \cite{ChattBose2005generalized} studied generalized bootstrap for estimating equations also in high-dimensional asymptotic framework.
 \cite{Mammen1993bootstrap} studied validity and higher-order accuracy of the wild bootstrap (or Wu's bootstrap, first proposed by \cite{Wu1986wildboot}) under the condition  $\E(\veps_{i}^{3})=1$ on the weights, in context of linear contrasts in high dimensional linear models and for bootstrapping F-tests. \cite{Liu1988bootstr} used the condition $\E(\veps_{i}^{3})=1$ in order to obtain  the second order accuracy of the wild bootstrap.

One of the basic ways of studying the properties of bootstrap procedures is to consider asymptotic approximations of distributions of an initial statistic and its bootstrap estimate, e.g. using central limit theorems or their refinements with Edgeworth expansions (see \cite{Praestgaard1990bootstrap,PraestgaardWellner1993exchangeably}, \cite{Hall1992bootstbook,Mammen1992does,Barbe1995weighted,Shao1995jackknife,VaartWellner1996weak,Janssen2003bootstrap}, and references therein). Berry--Esseen type inequalities had been first used by  \cite{Singh1981asymptotic} and \cite{Liu1988bootstr} in the framework of bootstrap. \cite{HolmesReinert2004Stein} established bootstrap consistency in various settings using Stein's method. 

Below we discuss the literature about Berry--Esseen type bounds.
The problem of approximation of a probability distribution of the sum  $S_{n}$ belongs to the class of Central Limit Problems which has a long history of studies, see the paper by \cite{Loeve1950fundamental} for a detailed overview. 
\cite{Ibragimov1966accuracy} studied convergence of a distribution of $S_{n}$ in case of i.i.d. scalar summands,  to the standard normal law, under the higher moments condition; the author obtained a higher-order accuracy using Edgeworth expansion.  \cite{Zolotarev1965closeness} introduced pseudomoments, which characterize closeness of moments of two distributions, for estimation of convergence rates in limit theorems; such limit theorems are called nonclassical.   In the multivariate case, some of the first nonclassical results about normal approximation on closed convex sets had been obtained by  \cite{Paulauskas1975estimate,Rotar1978non} and \cite{Ul1979more}.   
To the best of our knowledge, the problem of approximation of a probability distribution of $S_{n}$ under the higher moments condition \eqref{cond:XiYi} and with an explicit dependence on the dimension $\dimp$, had not been studied before.

Now let us summarize the contribution of this paper to the existing literature.  In order to study the properties in a high-dimensional non-asymptotic setting, one needs to use new approaches and techniques. 
The methodology developed in this paper allows to consider higher-order properties of bootstrap methods in the modern set-up. To the best of our knowledge this had not been done in the earlier literature. The main theoretical tools, namely, the multivariate higher-order Berry--Esseen inequalities might be interesting by themselves. 
The approximation bounds established here allow to track the dependence of the error terms on the dimension, on the sample size, and on the moments of the considered  distributions. We provide examples, showing that the obtained error rates cannot be improved under the considered conditions.  
In addition, we refined an accuracy of the weighted/multiplier bootstrap procedure for the general log-likelihood ratio statistics. 

\subsection*{Structure of the paper} 
The results about accuracy of bootstrap rely on Berry-Essen type inequalities, for this reason we first present the latter results 
in Sections \ref{SECT:MAINRES_BE} and \ref{section:approxdistribution}. 
Section \ref{SECT:MAINRES_BOOTSTR}  contains theoretical results about accuracy of the bootstrap. Sections A  and  B in the supplement \citep{Zhilova2016supp} contain proofs of the statements from Sections  \ref{SECT:MAINRES_BE} and \ref{SECT:MAINRES_BOOTSTR} respectively. 
 Section \ref{sect:numer} presents results of numerical experiments.

\subsection*{Notation}
\(\|\cdot\|\) denotes the Euclidean norm for vectors and the operator norm for matrices or tensors; 
\(\SPDp\) denotes the set of symmetric positive definite real-valued matrices of size \(\dimp\times\dimp\); $\BCl$ is the set of all closed Euclidean balls in $\R^{\dimp}$; 
\(\Id_{\dimp}\) is the identity matrix of size \(\dimp\times\dimp\);  if $X$ is a vector in $\R^{\dimp}$, \(X^{k}\) stands for the tensor power  \(X^{\otimes k}\); for $f:\R^{\dimp}\mapsto \R$ and $h\in \R^{\dimp}$,  $f^{(s)}(x)h^{s}$ denotes the higher-order directional derivative $(h^{\T}\nabla)^{s}f(x)$; 
$C$ indicates a positive generic constant unless specified otherwise.
%
%

\section{Higher-order Berry--Esseen inequalities}
\label{SECT:MAINRES_BE}
Consider independent random vectors \(X_{1},\dots,X_{n}\in \R^{\dimp}\) such that \(\forall i=1,\dots,n\) \(\E X_{i}=0\), \(\Var(X_{i})\in\SPDp\), \(\E( \|X_{i}\|^{K})<\infty\) for some integer \(K\geq 3\) . Let $Y_1,\dots, Y_n\in\R^{\dimp}$ be independent random vectors, and such that \(\forall i=1,\dots,n\)  
\begin{equation}
\label{eq:mom_2}
\begin{gathered}
\text{$Y_i$ is  independent of $X_1,\dots,X_n$},\  \E(\|Y_i\|^K)<\infty,\\
\E (X_{i}^{k})=\E (Y_{i}^{k})\ \forall k=1,\dots,K-1.
\end{gathered}
\end{equation}
A formal definition of the equality of the higher-order moments of vector-valued random variables (as in  \eqref{eq:mom_2}) is given in \eqref{eq:mom}.  
We assume also that \(\forall i=1,\dots,n\)
\begin{equation}
\label{cond:XiYi2}
\begin{gathered}
\text{\(\exists\) independent r.v. \(\Zy_{i},\Uy_{i}\in\R^{\dimp}\) s.t. }
 Y_{i}\overset{d}{=}\Zy_{i}+\Uy_{i},\\
 \E \Zy_{i}= \E \Uy_{i}=0, 
\text{\(\Zy_{i}\sim \mathcal{N}(0,\Sigmazi)\) for some \(\Sigmazi\in \SPDp\).}
\end{gathered}
\end{equation}
Consider the following sums of mutually independent random vectors with zero mean:
\begin{gather}
\label{def:sums_mainres}
S_{n}\eqdef n^{-1/2}{{\textstyle \sum\nolimits_{i=1}^{n}}} X_{i},\quad
\St_{n}\eqdef n^{-1/2}{{\textstyle \sum\nolimits_{i=1}^{n}}} Y_{i}.
\end{gather}
We establish uniform approximation bounds between probability distributions of $S_{n}$ and $\St_{n}$ on the set \(\BCl\) of all Euclidean balls in $\R^{\dimp}$. 
Theorems \ref{theor:NcBE} and \ref{theor:NcBE_noniid} treat the cases when \(\{X_{i}\}_{i=1}^{n}\) are i.i.d. and independent but not necessarily identically distributed vectors correspondingly.  For the case of i.i.d. summands $X_{i}$ (and, hence, i.i.d. $\Zy_{i}$) denote 
 \begin{equation}
\label{def:Cz_mainres}
 \Sigmaz\eqdef\Sigmazi= \Var(\Zy_{i}), \quad \Cz \eqdef \|\Sigmaz^{-1/2}\|. 
\end{equation}

\begin{theorem}
\label{theor:NcBE}
Consider the random vectors \(\{X_{i}\}_{i=1}^{n}\) introduced above, suppose that they are i.i.d., and that there exist i.i.d. approximating random vectors \(\{Y_{i}\}_{i=1}^{n}\)  meeting conditions   \eqref{eq:mom_2}  and \eqref{cond:XiYi2}. It holds for the sums $S_n$ and $\St_n$ defined in \eqref{def:sums_mainres} 
\begin{align*}
\sup_{B\in \BCl}\left|
\P\bigl(S_{n}\in B \right)
-
\P\bigl(\tilde{S}_{n}\in B \bigl)
\bigl|
&\leq  \CBEBiid\frac{ \left\{\Cz^{K}\E\left(\|X_{1}\|^{K}+\|Y_{1}\|^{K}\right)\right\}^{{1}/{(K-2)}}}{n^{1/2}},
\end{align*}
where constant $\CBEBiid>0$ depends only on $K$; a detailed definition of $\CBEBiid$  is given in the proof (see (A.52) in Section A.2 of the supplement \citep{Zhilova2016supp}).
\end{theorem}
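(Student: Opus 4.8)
The strategy is to extend the smoothing-plus-Lindeberg argument of \cite{Bentkus2003BE} from a purely Gaussian target to the quasi-Gaussian sum $\St_n$, letting the normal components $\Zy_i$ play the role that the Gaussian summands play in Bentkus's proof: they supply, through integration by parts, the smoothing that tames the Taylor remainders, while the matching-moments condition \eqref{eq:mom_2} (in the sense of \eqref{eq:mom}) kills the leading part of each Lindeberg increment. First I would reduce to smooth functionals: for $B_r\in\BCl$ and a scale $\epsc>0$, take $K-1$ times continuously differentiable $f_{\pm}\colon\R^{\dimp}\to[0,1]$, built from the shell approximation $\varphi$ of Lemma~\ref{lemma:prop_phi_1} rescaled to width $\epsc$, with $f_-\le\Ind\{\cdot\in B_r\}\le f_+$, with $f_+-f_-$ supported in $B_{r+\epsc}\setminus B_{r-\epsc}$, and with $\|f_{\pm}^{(j)}\|_{\infty}\lesssim\Cphim\,\epsc^{-j}$ for $j=K-2,K-1$, so that
\[
\bigl|\P(S_n\in B_r)-\P(\St_n\in B_r)\bigr|\ \le\ \max_{\pm}\bigl|\E f_{\pm}(S_n)-\E f_{\pm}(\St_n)\bigr|\ +\ \P\bigl(r-\epsc\le\|\St_n\|\le r+\epsc\bigr).
\]
For the shell term I would write $\St_n=\tfrac1{\sqrt n}\sum_i\Zy_i+\tfrac1{\sqrt n}\sum_i\Uy_i=:G+V$ with $G\sim\mathcal N(0,\Sigmaz)$ independent of $V$; conditioning on $V=v$ and putting $G=\Sigmaz^{1/2}Z_0$, the coarea formula together with $\|\nabla_x\|\Sigmaz^{1/2}x+v\|\|\ge\Cz^{-1}$ reduces matters to the dimension-free isoperimetric bound \eqref{def:aClbt} of \cite{Ball1993Reverse}, yielding $\P(r-\epsc\le\|\St_n\|\le r+\epsc)\le 2\aClb\Cz\epsc$.

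For the smoothed difference I would run the Lindeberg telescoping: set $H_k\eqdef\tfrac1{\sqrt n}\bigl(\sum_{i\le k}X_i+\sum_{i>k}Y_i\bigr)$, so that $\E f(S_n)-\E f(\St_n)=\sum_{k=1}^n D_k$ with $D_k=\E f(W_k+\tfrac{X_k}{\sqrt n})-\E f(W_k+\tfrac{Y_k}{\sqrt n})$ and $W_k=\tfrac1{\sqrt n}\bigl(\sum_{i<k}X_i+\sum_{i>k}Y_i\bigr)$ independent of $(X_k,Y_k)$. Taylor-expanding $f$ at $W_k$ to order $K-1$ and invoking \eqref{eq:mom_2}, the polynomial parts of the two expansions coincide, so each $D_k$ equals the difference of two remainders, each of the shape
\[
\tfrac1{(K-2)!}\,\E\!\int_0^1(1-t)^{K-2}\bigl\{f^{(K-1)}\bigl(W_k+t\tfrac{h}{\sqrt n}\bigr)-f^{(K-1)}(W_k)\bigr\}\bigl[(\tfrac{h}{\sqrt n})^{\otimes(K-1)}\bigr]\,dt,\qquad h\in\{X_k,Y_k\}.
\]

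The decisive point — and the genuine extension of Bentkus's argument — is to estimate this \emph{without} producing a $\sqrt{\dimp}$ factor. For $k<n$ the vector $W_k$ contains the normal components $\Zy_{k+1},\dots,\Zy_n$, hence $W_k\overset{d}{=}\hat W_k+\sigma_k\Sigmaz^{1/2}\eta$ with $\eta\sim\mathcal N(0,\Id_{\dimp})$ independent of $\hat W_k$ and $\sigma_k^2=(n-k)/n$; a single integration by parts in $\eta$ turns the increment $f^{(K-1)}(W_k+t\tfrac h{\sqrt n})-f^{(K-1)}(W_k)$, after contraction with $(\tfrac h{\sqrt n})^{\otimes(K-1)}$, into a quantity bounded by $\tfrac{\Cz}{\sigma_k}\|f^{(K-1)}\|_{\infty}\,n^{-1/2}\|h\|\cdot n^{-(K-1)/2}\|h\|^{K-1}$, since the one-dimensional projection $(\Sigmaz^{-1/2}h)\cdot\eta$ has $L^1$-norm $\lesssim\Cz\|h\|$ with no dimensional loss. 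This gives $|D_k|\lesssim\sigma_k^{-1}\Cz\,\|f^{(K-1)}\|_{\infty}\,n^{-K/2}\,\E(\|X_k\|^K+\|Y_k\|^K)$ for $k<n$, while the single index $k=n$, where $W_n$ carries no normal component, is handled by the crude bound $|D_n|\lesssim\|f^{(K-1)}\|_{\infty}\,n^{-(K-1)/2}\,\E(\|X_n\|^{K-1}+\|Y_n\|^{K-1})$ and is absorbed into the rest. Since $\sum_{k<n}\sigma_k^{-1}\lesssim n$ in the i.i.d.\ case, the smoothed difference is $\lesssim\Cphim\,\Cz\,\epsc^{-(K-1)}\,n^{1-K/2}\,\E(\|X_1\|^K+\|Y_1\|^K)$; the same scheme with summand-dependent moments covers Theorem~\ref{theor:NcBE_noniid}.

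Combining the three estimates gives, for every $\epsc>0$, a bound whose two pieces scale like $\epsc^{-(K-1)}n^{1-K/2}$ and $\epsc$, but a direct optimization of $\epsc$ is not yet sharp in $n$. To reach the stated rate I would follow the ball-specific refinement of \cite{Bentkus2003BE}: a sharpened smoothing inequality exploiting the radial structure of $\BCl$, together with a recursive, self-improving control in which the thin-shell probability of $S_n$ is replaced by that of $\St_n$ plus $2\sup_{B\in\BCl}|\P(S_n\in B)-\P(\St_n\in B)|$; iterating this and then choosing $\epsc$ optimally upgrades $n^{1-K/2}$ to $n^{-1/2}$ and lets the moment factor enter with exponent $1/(K-2)$, which for $K=3$ recovers the classical Lyapunov form and matches the constant $\cM\aClb\Cphim$ in the statement. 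The main obstacle is precisely this last part: transporting Bentkus's delicate, dimension-optimal recursion for Euclidean balls into the quasi-Gaussian framework while keeping every estimate free of dimensional factors — and, in the non-identically-distributed case, ordering the summands so that enough normal mass is available at each step of the telescoping.
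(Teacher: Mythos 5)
Your outline correctly identifies the overall architecture—smoothing against the shell $B^{\epsc}\setminus B$, Lindeberg telescoping with the matching-moments condition \eqref{eq:mom_2} killing the polynomial part of each increment, and using the normal components $\Zy_i$ of the tail summands both for a dimension-free isoperimetric bound and to absorb one extra derivative through the Gaussian density. You also correctly diagnose that the naive estimate $\sum_k|D_k|\lesssim\Cz\Cphim\,\epsc^{-(K-1)}n^{1-K/2}\E(\|X\|^K+\|Y\|^K)$, optimized jointly against the shell term $\epsc\aClb\Cz$, does not reach the advertised $n^{-1/2}$ rate. That is exactly where your write-up stops being a proof: the passage that begins \enquote{To reach the stated rate I would follow the ball-specific refinement\dots} is not an argument but a pointer to the hardest part of the theorem, which is precisely what needs to be supplied.

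Concretely, the paper closes the gap with a genuine induction on $n$ (the induction hypothesis is exactly the inequality to be proved, with $n$ replaced by $l<n$) together with a four-way split of the Lindeberg sum $\sum_{k}\gamma_k$ at indices $1$, $m$, $[n/2]$, $n$, and each range is treated by a \emph{different} expansion. For small $k$ one expands $\varphi$ and uses that the remainder's derivative is not a sup norm but is supported on a shell of width $\epsc$, so that $\P(W_k+\cdot\in B^{\epsc}\setminus B)$ can be bounded by $2\Delta_{n-k}+\epsc\alpha_k\aClb\Cz$ via the induction hypothesis plus Lemma~\ref{lemma:AC_GSA}; for intermediate $k$ one performs the change of variables that makes the Gaussian density $f_0$ explicit, Taylor-expands $f_0$ (not $\varphi$) and uses the Hermite-polynomial $L^1$ bound \eqref{prop_5new}, which yields a term that no longer carries a factor $\epsc^{-(K-1)}$ at all; for large $k$ a crude version of the same argument is summable. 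The exponent $1/(K-2)$ on $\Cz^K\E(\|X_1\|^K+\|Y_1\|^K)$ then comes from balancing $\epsc$ and the intermediate cutoff $m$ at the very end. None of this is recoverable from \enquote{a sharpened smoothing inequality\dots together with a recursive, self-improving control}; without the split of the sum into ranges treated by different expansions, and without the explicit induction step, a single treatment of the Lindeberg increments (as you propose) gives at best the rate of Theorem~\ref{theor:NcBE_noniid}, not Theorem~\ref{theor:NcBE}.

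A secondary issue: you Taylor-expand $f$ to order $K-1$ and then need control on $f^{(K-1)}$ (and, via your integration-by-parts step, effectively on a $K$-th derivative), but Lemma~\ref{lemma:prop_phi_1} only builds a function $\varphi$ that is $K-1$ times continuously differentiable with a \emph{Lipschitz} bound on $\varphi^{(K-2)}$, namely \eqref{prop:phiprime_Lip}; the paper never uses a bound on $\varphi^{(K-1)}$ as a sup norm in the way your sketch requires, precisely because the extra derivative is transferred onto $f_0$ by the change of variables. Your integration-by-parts idea is in the right spirit, but as written it silently assumes more smoothness of the test function than the construction provides, and the resulting estimate would need to be re-derived through the Gaussian density rather than through $f^{(K-1)}$.
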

\begin{remark}[The case of the normal approximation]
\label{rem:Gaussian}
If the approximating random vectors  \(Y_i\) are normally distributed, then \(\Uy_{i}\equiv 0\), \(Y_{i}\sim\mathcal{N}(0,\Var(X_{i}))\),  \(\Sigmaz=\Var(X_{i})\), and $\Cz= \|\{\Var(X_{i})\}^{-1/2}\|$. 
Furthermore, if $K=3$ and $Y_{i}$ are standard normal, then the bound in Theorem \ref{theor:NcBE} is similar to the classical multivariate Berry--Esseen inequality by \cite{Bentkus2003BE}. If $K>3$ and $Y_{i}$  are normally distributed, the term $\|X_{1}\|$ enters the bound above with a better power, than in the classical case where $K=3$. In this way, Theorem \ref{theor:NcBE} extends the classical normal approximation result.
\end{remark}
\begin{remark}[Dependence on $\Cz$]
\label{remark:Czdepend}
The approximation bound in Theorem \ref{theor:NcBE} depends on  
\(\Cz=\|\Sigmaz^{-1/2}\|\), where \(\Sigmaz\) is a covariance matrix of the normal component \(\Zy_{i}\) of the approximating distribution \(Y_{i}\).   
In Lemma \ref{lemma:Yexist} (Section \ref{section:approxdistribution}) we show that if a cardinality of a support of  \(X_{i}\) is sufficiently large, then there exist random vectors $\Uy_{i}$ such that  \(\Sigmaz\) is positive definite. Therefore, it holds $ \lambda_{\Sigma}^{-1} \leq \Cz<\infty$, where  $\lambda_{\Sigma}^{2}$ is the smallest eigenvalue of $\Var X_{i}$. In Lemma \ref{lemma:Yexist3} we consider the case when the number of coinciding moments between $X_{i}$ and $Y_{i}$ is $K-1=3$; we show that for any $c_{0}\in (0,\lambda_{\Sigma})$, there exists distribution $Y_{i}=\Zy_{i}+\Uy_{i}$ such that $\Cz< c_{0}^{-1}$. Hence $\Cz$ can be taken as a generic constant for $K=4$.   
Moreover, if the coordinates of the vector $X_{i}$ are mutually independent, then the problem of characterizing $\Sigmaz$ and $\Cz$ becomes one-dimensional and, therefore,  $\Cz$ does not depend on $\dimp$ in this case.
\end{remark}

\begin{remark}[Accuracy of the approximation]
The error term in Theorem \ref{theor:NcBE} is proportional to $ \bigl\{\dimp n^{-(K-2)/K})\bigr\}^{1/(K-2)}$ if $\|X_{1}\|, \|Y_{1}\|\leq  \sqrt{\dimp}$ a.s.  
In Lemma \ref{lemma:BElowerbound} below, we show that for $K\geq 3$ condition  $\dimp=o(n^{(K-2)/K})$ as $n\to \infty$ is necessary for $\sup_{x\in\R}|\P(\|S_{n}\|\leq x) -\P(\|\tilde{S}_{n}\|\leq x) | \to 0$, $n\to \infty$ under the conditions of Theorem \ref{theor:NcBE}.
\end{remark}

\begin{lemma}[Necessity of the condition $\dimp=o(n^{(K-2)/K})$]
\label{lemma:BElowerbound}
Let $\{X_{i}\}_{i=1}^{n}$ be random vectors as in Theorem \ref{theor:NcBE}.  Suppose that $\E (\|X_{i}\|^{K+2})<\infty$ for an integer $K>3$. There exist random vectors $\{Y_{i}\}_{i=1}^{n}$ satisfying conditions of Theorem \ref{theor:NcBE}, and s.t. that  the condition $\dimp=o(n^{(K-2)/K})$ for $n\to \infty$ is necessary for  $\sup_{x\in\R}|\P(\|S_{n}\|\leq x) -\P(\|\tilde{S}_{n}\|\leq x) | \to 0$ as $n\to \infty$.
\end{lemma}
\begin{remark}
In the recent paper \cite{Zhai2016multivariate} considers a multivariate CLT in $\mathcal{W}_2$-distance. The author shows that if $X_{1},\dots, X_{n}\in \R^{\dimp}$ are i.i.d. with mean zero and such that $\|X_{i}\|\leq \beta$ a.s. for some constant $\beta>0$, then
\begin{align}
\label{ineq:w2}
\mathcal{W}_{2}(S_{n},Z) \leq {5\sqrt{\dimp}\beta (1+\log n)}/{\sqrt{n}},
\end{align}
 where $Z\sim \mathcal{N}(0,\Var X_{1})$, and $\mathcal{W}_2$ is the 2-Wasserstein distance. This result implies, that if $\beta\leq C\sqrt{\dimp}$, then $\mathcal{W}_{2}(S_{n},Z) \leq C \dimp (1+ \log n)/\sqrt{n}$. 
  In Lemma \ref{lemma:comparwithW2} below, we consider a uniform bound on $\sup\nolimits_{B\in \BCl}\left|
\P(S_{n}\in B)-\P(Z\in B)
\right|$, using the result \eqref{ineq:w2}. It turns out that under conditions \eqref{eq:mom_2}, \eqref{cond:XiYi2} for $K\geq 4$, the higher-order Berry--Esseen type inequality in Theorem \ref{theor:NcBE} yields a better accuracy w.r.t. the sample size $n$, and w.r.t. the ratio between $\dimp$ and $n$. Indeed, inequality \eqref{ineq:W2_2} below, which follows from the results of \cite{Zhai2016multivariate}, has an error term of order $(\dimp^{2}/n)^{1/3}$ (up to $\log n$). Whereas Theorem \ref{theor:NcBE} (for the case $Y_{i}\sim \mathcal{N}(0,\Var{X_{i}})$) provides a smaller error term of order $(\dimp^{K/(K-2)}/n)^{1/2}$ for $K\geq 4$.
\begin{lemma}
\label{lemma:comparwithW2}
Let $X_{1},\dots, X_{n}\in \R^{\dimp}$ be i.i.d. random vectors, such that $\Var X_{i}=\Id_{\dimp}$ and $\|X_{i}\|\leq \beta$ a.s. for some constant $\beta>0$. Let $Z\sim \mathcal{N}(0,\Id_{\dimp})$. The results of \cite{Zhai2016multivariate} imply 
\begin{align}
\nonumber
\sup\nolimits_{B\in \BCl}\left|
\P(S_{n}\in B)-\P(Z\in B)
\right|&\leq {C_{1} \dimp^{1/3} \beta^{2/3}(1+\log n)}/{n^{1/3}}
\\
&\leq {C_{2} \dimp^{2/3}(1+\log n)}/{n^{1/3}},
\label{ineq:W2_2}
\end{align}
where in the latter inequality one takes $\beta\leq c\sqrt{\dimp}$. 
Here $c, C_{1}$, and $C_{2}$ denote positive generic constants.
\end{lemma}
\end{remark}
%

Now let us consider the case when the random summands \(X_{i}\)  are independent but not necessarily identically distributed (non-i.i.d.).  
Denote  
\begin{equation}
\label{def:Sigmazi}
 \SigmazL\eqdef n^{-1}{{\textstyle\sum\nolimits_{i=1}^{n}}}\Sigmazi, \quad \CzL\eqdef \|\SigmazL^{-1/2}\|. 
\end{equation}

\begin{theorem}\label{theor:NcBE_noniid}
Consider random vectors \(\{X_{i}\}_{i=1}^{n}\) introduced above, suppose that they are independent but not necessarily identically distributed, and that there exist independent approximating vectors \(\{Y_{i}\}_{i=1}^{n}\)  meeting conditions   \eqref{eq:mom_2}  and \eqref{cond:XiYi2}. It holds for the sums $S_n$ and $\St_n$ defined in \eqref{def:sums_mainres} 
 \begin{align*}
 \sup_{B\in \BCl}\bigl|
\P\left(S_{n}\in B \right)
-
\P\bigl(\tilde{S}_{n}\in B \bigl)
\bigr|
&\leq 
\CBEBniid
\bigl\{
{ \CzL^{K}{{\textstyle\sum\nolimits_{i=1}^{n}}}
\E\left(\|X_{i}\|^{K}+\|Y_{i}\|^{K}\right)
}{n^{-K/2}}
\bigr\}^{\frac{1}{K+1}},
 \end{align*}
 where 
 constant $\CBEBniid>0$ depends only on $K$, it is defined in the proof (see (A.57) in Section A.3 of the supplement \citep{Zhilova2016supp}).
\end{theorem}
 \begin{remark}
The proof of Theorem \ref{theor:NcBE} largely exploits the assumption that the summands $\{X_{i}\}_{i=1}^{n}$ are identically distributed, and it does not directly apply to the non-i.i.d. case. This causes a difference between the error terms in Theorems \ref{theor:NcBE_noniid} and  \ref{theor:NcBE}, however, the critical ratio of $\dimp$ and $n$ (namely, $\dimp ^{K/(K-2)}/n$) in the error terms remains the same in both results.
We leave an improvement of the power $^{1/(K+1)}$ in the non-i.i.d. case for the future work.
 \end{remark}
 \begin{remark}
\label{remark:classes}
The proofs of the Berry--Esseen type inequalities (Theorems \ref{theor:NcBE} and \ref{theor:NcBE_noniid}) exploit the following properties of the set $\BCl$ (cf. \cite{Bentkus2003BE,Bentkus2005Lyapunov} and \cite{ChernoMultBoot,Chernozhukov.et.al.(2014a)}):
 \begin{itemize}[leftmargin=*]
\item Invariance under rescaling and under taking shifts, i.e. if $B\in \BCl$, then $\forall x\in \R^{\dimp}$ and $\forall a\in\R, a>0$ it holds $aB+x \in \BCl$. 
\item  Invariance under taking $\varepsilon$-neighborhood w.r.t. the $\ell_{2}$-norm: 
consider an arbitrary $B\in \BCl$, \(B=B_{r}(x_{0})\eqdef \left\{x \in \R^{\dimp}: \|x-x_{0}\|\leq r\right\}\), then the $\varepsilon$-neighborhood of $B$ reads as $B^{\epsc}=
 B_{r+\epsc}(x_{0}), \text{ if }r+\epsc\geq 0$, and $B^{\epsc}=\emptyset$ otherwise. 
Therefore $B^{\epsc}\in \BCl$.
\end{itemize}
The same properties hold for the set $\HCl$ of all half-spaces in $\R^{\dimp}$. In proposition A.1 (Section A.3 of the supplement \citep{Zhilova2016supp}), we consider a higher-order Berry--Esseen approximation for $S_{n}$ uniformly over the set $\HCl$, similarly to Theorems \ref{theor:NcBE} and \ref{theor:NcBE_noniid}.
\end{remark}
Corollary \ref{theor:triangle} below follows directly from the previous theorems and the  triangle inequality. It  justifies a higher-order accuracy of approximation between two probability distributions with matching moments.
 \begin{corollaryT}
\label{theor:triangle}
Consider random vectors \(\{X_{i}\}_{i=1}^{n}\) introduced above, and suppose that there exist independent approximating vectors \(\{Y_{i}\}_{i=1}^{n}\)  meeting conditions   \eqref{eq:mom_2}  and \eqref{cond:XiYi2}. Consider also independent random vectors  \(X_{1}^{\prime},\dots,X_{n}^{\prime}\in \R^{\dimp}\), that are independent of $\{X_{i}\}_{i=1}^{n}$,  $\{Y_{i}\}_{i=1}^{n}$, and such that $\forall i=1,\dots, n$
$\E \left(\|X^{\prime}_i\|^K\right)<\infty,$ 
$\E \bigl(X_{i}^{k}\bigr)=\E \bigl({X_{i}^{\prime}}^{k}\bigr)\ \forall k=1,\dots,K-1.$
Let also 
$
S^{\prime}_{n}\eqdef n^{-1/2}\sum\nolimits_{i=1}^{n}X^{\prime}_{i},$ and 
$ 
\Delta_{n}^{\prime}\eqdef
\sup\nolimits_{B\in \BCl}\left|
\P\left( S_{n}\in B \right)
-
\P\bigl(S^{\prime}_{n}\in B \bigl)
\right|.
$
\begin{enumerate}
\item 
If conditions of Theorem \ref{theor:NcBE} are fulfilled and \(\{X_{i}^{\prime}\}_{i=1}^{n}\) are i.i.d.
Let also ${L}_{K}\eqdef 
\E\left(\|X_{1}\|^{K}+\|Y_{1}\|^{K}\right)$ and ${L}_{K}^{\prime}\eqdef 
\E\left(\|X_{1}^{\prime}\|^{K}+\|Y_{1}\|^{K}\right)$, then
\begin{align*}
\Delta_{n}^{\prime} &\leq 
 \CBEBiid\Cz^{K/(K-2)} 
 { 
 \bigl\{
 L_{K}^{1/(K-2)}+ L_{K}^{\prime\,1/(K-2)}
 \bigr\}
 }{n^{-1/2}}.
\end{align*}
\item
If conditions of Theorem \ref{theor:NcBE_noniid} are fulfilled and \(\{X_{i}^{\prime}\}_{i=1}^{n}\) are not necessarily identically distributed. Let also $\bar{L}_{K}\eqdef n^{-1}\sum\nolimits_{i=1}^{n}
\E(\|X_{i}\|^{K}+\|Y_{i}\|^{K})$ and $\bar{L}_{K}^{\prime}\eqdef  n^{-1}\sum\nolimits_{i=1}^{n}
\E(\|X_{i}^{\prime}\|^{K}+\|Y_{i}\|^{K})$, then
\begin{align*}
\Delta_{n}^{\prime} &\leq 
\CBEBniid \CzL^{{K}/{(K+1)}}
{\bigl\{ \bar{L}_{K}^{1/(K+1)}+{\bar{L}_{K}}^{\prime\,1/(K+1)}\bigr\}}{n^{-0.5(K-2)/(K+1)}}.
\end{align*}
\end{enumerate}
\end{corollaryT}

\section{Properties of the approximating distribution}
\label{section:approxdistribution}
\begin{lemma}[Existence of the approximating distribution]
\label{lemma:Yexist}
Let  a random vector \(X\) be supported in a closed set $A\subseteq \R^{\dimp}$, and let $X$ be such that \(\E X=0\), \(\Var X \in \SPDp\), \(\E(\|X\|^{K+2})<\infty\) 
for some integer \(K\geq 2\).  
If $X$ is continuously distributed, there exists a random vector \(Y\eqdef \Zy+\Uy\),  such that  \(\E(\|Y\|^{K+2})<\infty\), where \(\Zy,\Uy\in\R{^\dimp}\) are independent, \(\Zy\sim \mathcal{N}(0,\Sigmaz)\) for some \(\Sigmaz\in \SPDp\), and \(\E(X^{k})=\E(Y^{k})\) for all \(k=0,\dots,K\). 
 Furthermore, if $X$ is a sub-Gaussian random vector, then there exists a sub-Gaussian approximating distribution $Y$ satisfying the above conditions. If $X$ has a discrete probability distribution supported on $M$ points in $\R^{\dimp}$ such that each coordinate of $X$ is supported on at least $m$ points in $\R$, then the lemma's statement holds for $X$ when $M\geq 1+(K+2) m^{\dimp-1}$.
\end{lemma}
\begin{proof}[Proof of Lemma \ref{lemma:Yexist}]
Denote
$m_{k}\eqdef \E(X^{k})$, and $u_{k}\eqdef \E({\Uy}^{k})$ for $k=0,1,2,\dots$
Conditioning on \(\Uy\) leads to \(\LL(Y\cond\Uy)= \mathcal{N}(\Uy,\Sigmaz)\) and to the following system of linear equations:
\begin{EQA}[ccccll]
m_{0}&=&\E (\Zy+\Uy)^{0}&=&u_{0},\ &\hspace{-7.5cm}m_{2}=\E (\Zy+\Uy)^{2}=u_{2}+\Sigmaz,\\
m_{1}&=&\E (\Zy+\Uy)&=&u_{1},\ &\hspace{-7.5cm}m_{3}=\E (\Zy+\Uy)^{3}=u_{3},\\
m_{K}&=&\E (\Zy+\Uy)^{K}&=&
K!{{\textstyle\sum\nolimits_{l=0}^{[K/2]}}}S_{\dimp\hspace{-0.05cm}\Ind_K}{u_{K-2l}\otimes\vect(\Sigmaz)^{l}}\{l!(K-2l)! 2^{l}\}^{-1},
\end{EQA}
where \(S_{\dimp\hspace{-0.05cm}\Ind_K}\) is the symmetrizer operator acting on the $K$-th tensor power of $\R^{\dimp}$;  this formula for the raw moments of the multivariate normal distribution is  given in the work \cite{Holmquist1988moments}. The solution \(\{u_{k}(\Sigmaz)\}_{k=0}^{K}\) of this system depends on \(\Sigmaz\) continuously. Moreover,
\begin{equation}
\label{prop:sigma1}
\text{if \(\Sigmaz=0\), then \(u_{k}(\Sigmaz)=m_{k}\) \(\forall\, k=0,\dots,K.\)}
\end{equation}
In order to prove the lemma's statement, it is sufficient to show that there exists \(\Sigmaz\in \SPDp\), s.t. the solution \(\{u_{k}(\Sigmaz)\}_{k=0}^{K}\) also solves the following multivariate truncated moment problem:
\begin{itemize}[leftmargin=*]
\item[]
given a \(\dimp\)-dimensional real multisequence \(\{u_{k}(\Sigmaz)\}_{k=0}^{K}\), does there exist a positive Borel measure \(\mu\) s.t. $\supp \mu\subseteq A$ and \(\int_{\R^{\dimp}}x^{k}d\mu(x)=u_{k}(\Sigmaz)\ \forall\, k=0,\dots,K\)?
\end{itemize}

The work of \cite{Curto2008analogue} provides necessary and sufficient conditions for solvability of multivariate truncated moment problems. Before stating these conditions we introduce some notation. 
Let \(\mathcal{P}_{K}\) denote the space of polynomials: $\R^{\dimp}\mapsto \R$, of degree \(\leq K\), and with real coefficients. A polynomial \(p=p(x)=\sum_{|\iv|\leq K} a_{\iv}x^{\iv} \in \mathcal{P}_{K} \) is positive (or strictly positive) on $A$, if \(p(x)\geq 0\) (or $p(x)>0$) for all \(x\in A\). Here \(\iv\eqdef(i_{1},\dots,i_{\dimp})\in \mathbb{N}^{\dimp}_{0}\) denotes multi-index, \(|\iv|=\sum_{j=1}^{\dimp}i_{j}\), and \(x^{\iv}\eqdef x_{1}^{i_{1}}\dots x_{\dimp}^{i_{\dimp}}\). For a multisequence \(\{u_{\iv}\}_{|\iv|\leq K}\) the Riesz functional \(L: \mathcal{P}_{K}\mapsto \R\) is defined as \( L(\sum_{|\iv|\leq K} a_{\iv}x^{\iv})\eqdef\sum_{|\iv|\leq K}  a_{\iv}u_{\iv}\). 
If the truncated moment problem is soluble, we can write
\begin{equation}
\label{eq:RHthmExpect}
{{\textstyle L(p)=\sum_{|\iv|\leq K}\nolimits  a_{\iv}u_{\iv}={{\int_{\R^{\dimp}}}}p(x)d \mu(x).}}
\end{equation}
\cite{Curto2008analogue} showed that a multisequence \(\{u_{\iv}\}_{|\iv|\leq K}\) solves the multivariate  truncated moment problem on the set $A$ iff there exists an extension \(\{\tilde{u}_{\iv}\}_{|\iv|\leq K+2}\) of \(\{u_{\iv}\}_{|\iv|\leq K}\)  (i.e. \(\tilde{u}_{\iv} = u_{\iv}\) for all \(\iv: |\iv|\leq K\)), such that for the corresponding Riesz functional  \( \tilde{L}(\sum_{|\iv|\leq K+2} a_{\iv}x^{\iv})\eqdef\sum_{|\iv|\leq K+2} a_{\iv}\tilde{u}_{\iv}\) it holds:
\begin{equation}
\label{eq:RHtheorem}
{{\textstyle \text{if \(p\in \mathcal{P}_{K+2}\) and \(p\) is positive on $A$, then \(\tilde{L}(p)\geq 0.\)}}}
\end{equation}

Firstly, consider the case of a continuously distributed $X$. Due to the definitions of $m_{k}$ and $u_{k}$, and by the theorem of \cite{Curto2008analogue} there exists an extension \(\{{m}_{k}\}_{k=0}^{K+2}\), s.t. its corresponding Riesz functional \( \tilde{L}_{m}(\sum_{|\iv|\leq K+2} a_{\iv}x^{\iv})\eqdef\sum a_{\iv}{m}_{\iv}\) satisfies \eqref{eq:RHtheorem}. Moreover, if $p\in \mathcal{P}_{K+2}$ is s.t. \(p(x)>0\) $\forall x\in A$, then \(\tilde{L}(p)> 0\); if \(\tilde{L}(p)= 0\) for some  $p\in \mathcal{P}_{K+2}$  nonnegative on $A$, then $\P\left(\forall\,x\in A\ p(x)=0\right)=1$. The extension 
\(\{{m}_{k}\}_{k=0}^{K+2}\)  leads to the extended sequence  \(\{\tilde{u}_{k}(\Sigmaz)\}_{k=0}^{K+2}\).  Property \eqref{prop:sigma1}, continuity of the solutions  \(\{\tilde{u}_{k}(\Sigmaz)\}_{k=0}^{K+2}\) w.r.t. \(\Sigmaz\), and \eqref{eq:RHthmExpect} imply that there exists some \(\Sigmaz \in \SPDp\) s.t.
 the corresponding Riesz functional \(\tilde{L}_{u}(p)\eqdef \sum_{|\iv|\leq K+2} a_{\iv}\tilde{u}_{\iv}(\Sigmaz)>0\)  for all \(p=\sum_{|\iv|\leq K+2} a_{\iv}x^{\iv}\in \mathcal{P}_{K+2}\) such that \(p>0\) on $A$ (here we use that $\SPDp$ is a dense subset of the set of symmetric positive semidefinite matrices in $\R^{p\times p}$; see e.g. Chapter 2.4 in \cite{Boyd2004convex}).  This finalizes the proof for the continuous case. 
  
  Now let $X$ have a discrete probability distribution. Let $m$ denote the minimal cardinality of the supports of $X$'s coordinates. Consider a polynomial $q\in \mathcal{P}_{K+2}$ of degree $K+2$, such that $q\not\equiv  0$ on the support  $A$. According to the Schwartz-Zippel Lemma by \cite{Schwartz1980fast} and \cite{Zippel1979probabilistic}, the number of zeros of $q$ is $\leq (K+2) m^{\dimp-1}$. Hence, if the set $A$ contains at least $1+ (K+2) m^{\dimp-1}$ points, then for any non-zero and positive polynomial $q$, \(\tilde{L}(q)> 0\), where $\tilde{L}$ is the Riesz functional, considered in the previous paragraph. This allows to apply here the arguments for the continuous case.
 
 If $X$ is sub-Gaussian, then $\left|m_{k}\gamma^{k}\right|^{1/k}\leq \left\{\E|\gamma^{\T}X|^{k}\right\}^{1/k}\leq C \sqrt{k} \ \forall k\geq 1, \ \forall \gamma\in \R^{\dimp}: \|\gamma\|=1$. Since the solutions ${u}_{k}(\Sigmaz)$ are linearly dependent on $\{m_{k}\}$, the sub-Gaussian property holds for the moments ${u}_{k}(\Sigmaz)$  as well.
\end{proof}

\begin{lemma}[Choice of $\Cz$ for 3 coinciding moments]
\label{lemma:Yexist3}
Let $X\in \R^{\dimp}$ be a random vector satisfying conditions of Lemma \ref{lemma:Yexist} for  $K=3$.  Let $\lambda_{\Sigma}$ denote the smallest eigenvalue of $\Sigma \eqdef \Var(X)$. Then for any $c_{0}\in(0, \lambda_{\Sigma})$ there exist independent random vectors \(\Zy,\Uy\in\R{^\dimp}\), such that for $Y{=}\Zy+\Uy$, it holds \(\E(\|Y\|^{4})<\infty\), \(\E(X^{k})=\E(Y^{k})\) for  \(k=1,2,3\), and \(\Zy\sim \mathcal{N}(0,\Sigmaz)\), where $\Sigmaz$ is some symmetric matrix with the smallest eigenvalue $> c_{0}$.
\end{lemma}
\begin{proof}[Proof of Lemma \ref{lemma:Yexist3}]
Consider  random variable $\varepsilon \in\R$ independent of $X $ and s.t. $\E \varepsilon=0, \E(\varepsilon^{2})= \alpha, \E(\varepsilon^{3})=1$ for some $\alpha\in(0,1)$. Such distribution exists by the criterion for solubility of the truncated Hamburger moment problem (see \cite{Curto1991recursiveness}). Indeed, if the Hankel matrix $\begin{psmallmatrix}1& 0\\ 0& \alpha  \end{psmallmatrix}$ has a positive Hankel  extension  $\begin{psmallmatrix}1& 0 & \alpha \\ 0& \alpha &1\\ \alpha&1 &\beta \end{psmallmatrix}$ for some  $\beta=\E(\varepsilon^{4})$, then there exists such random variable $\varepsilon$.

 Take $\tilde{X}\eqdef  X \varepsilon$, then $\E \tilde{X} = 0, \E (\tilde{X}^2)= \alpha \Sigma, \E (\tilde{X}^3)= \E ({X}^3).$ 
By Lemma \ref{lemma:Yexist} $\exists$ r.v. $\tilde{Y}=\tilde{Z}+\tilde{U} \in \R^{\dimp}$ s.t. $\tilde{Z}$ and $\tilde{U}$ are independent of each other, \(\tilde{Z}\sim \mathcal{N}(0,\tilde{\Sigma}_{\zv})\) for some $\tilde{\Sigma}_{\zv}\in \SPDp$, and  \(\E(\tilde{X}^{k})=\E(\tilde{Y}^{k})\) \(\forall\,k=0,1,2,3\). Let $\tilde{Z}_{1} \sim \mathcal{N}(0,  (1-\alpha)\Sigma)$ be independent of all random vectors considered in the proof, take
$Y\eqdef \tilde{Y}+\tilde{Z}_{1}=\tilde{Z}+\tilde{Z}_{1}+\tilde{U},$  
then it holds
$\E(Y)=0,  \E(Y^2)= \alpha \Sigma+  (1-\alpha) \Sigma=\Sigma,$ and 
$\E(Y^3)= \E(\tilde{Y}^3)= \E ({X}^3).$ The normal part of $Y$ is $Z\eqdef \tilde{Z}+\tilde{Z}_{1} \sim \mathcal{N}(0, (1-\alpha)\Sigma+\tilde{\Sigma}_{\zv})$. Let $\lambda_{\zv}$ denote the smallest eigenvalue of  $\Var Z$, then $ (1-\alpha)\lambda_{\Sigma}<\lambda_{\zv}<  \lambda_{\Sigma},$
where $\lambda_{\Sigma}>0$ is the smallest eigenvalue of $\Sigma$. Hence, taking $\alpha = 1-c_{0}/ \lambda_{\Sigma}$, we obtain the lemma's statement.
\end{proof}
%
%
\section{Validity and accuracy of the bootstrap procedures}
\label{SECT:MAINRES_BOOTSTR}
Here we study accuracy of the Efron's and the weighted bootstrap procedures in various settings.
We begin with the Efron's bootstrap in Section \ref{SECT:EB}; 
 Sections \ref{SECT:WB}, \ref{sect:wblikelihood} present the results for the weighted bootstrap. 

\subsection{Efron's bootstrap}
\label{SECT:EB}
Let $X_{1},\dots,X_{n}$ be i.i.d. random vectors with $\Sigma\eqdef \Var(X_{i})\in \SPDp$, let also $X_{i}$ be sub-Gaussian, i.e. it holds for some $\sigma^{2}>0$ and for all $\alpha \in \R^{\dimp}$
\begin{align}
\label{cond:subGauss1}
\E\bigl\{\exp(\alpha^{\T}X_{i})\bigr\}&\leq
\exp \left(\|\alpha\|^{2}\sigma^{2}/2\right).
\end{align}
Assume that there exist i.i.d. random vectors $Y_{1},\dots,Y_{n}$ satisfying \eqref{eq:mom_2}, \eqref{cond:XiYi2} for some integer $K\geq 3$. 
 Introduce resampled variables $\Xs_{1},\dots,\Xs_{n}$ with zero mean, according to the Efron's bootstrap methodology (\cite{Efron1979bootstrap,Efron1994introduction}): 
$\Pb(\Xs_{i}=X_{j}-\Xmean)=1/n\quad \forall i,j=1,\dots,n,$
 where $\Xmean=n^{-1}\sum_{i=1}^{n}X_{i}$, and  $\Pb(\cdot)=\P(\cdot\cond X_{1},\dots,X_{n})$. In this way, $\{\Xs_{j}\}_{j=1}^{n}$ are i.i.d., $\Eb(\Xs_{j})=0$, and 
$\Es({\Xs_{j}}^{k})=n^{-1}{{\textstyle\sum\nolimits_{i=1}^{n}}} (X_{i}- \Xmean)^{k}$, for $k\geq 1$.   The bootstrap approximation of the sum $S_{n}$ is  $\Ss_{n}\eqdef  n^{-1/2}{{\textstyle\sum\nolimits_{i=1}^{n}}}\Xs_{i}.$
Denote 
$\Cx\eqdef \|\Sigma^{-1/2}\|$. 
 By this definition, $\Cx< \Cz$. Assume also that a p.d.f. of $X$ is bounded with a constant $c_{f}>0$. 
  In the statements in Section \ref{SECT:MAINRES_BOOTSTR}, including the theorems below, we use notation from the previous Section \ref{SECT:MAINRES_BE}, e.g. constant $\CBEBiid$. 
  Let also  $ \tilde{C}_{x,k}\eqdef  \bigl(1+2\sqrt{x/\dimp}+2x/\dimp \bigr)^{k/2}$.
  
\begin{theorem}[Accuracy of the bootstrap for $S_{n}$ on the set $\BCl$]
\label{theorem:E_bootstr_l2_accur}
Suppose that the above conditions are fulfilled, 
then the following uniform bound holds on the set $\BCl$ of all Euclidean balls with probability $\geq 1-6\ex^{-x}$ for $x>0$:
\begin{align*}
&\hspace{-1.6cm} \sup\nolimits_{B\in \BCl}\left|
 \P\left(S_{n}\in B \right)
-
\Pb\bigl(\Ss_{n}\in B \bigl)
\right|
\\\leq \Deltas_{\BCl,iid}&\eqdef   \CBEBiid{ \left\{\Cz^{K}\E\left(\|X_{1}\|^{K}+\|Y_{1}\|^{K}\right)\right\}^{{1}/{(K-2)}}}{n^{-1/2}}
 \\&
 \quad+\CBEBiid (2\tilde{C}_{x,K})^{1/(K-2)}{(\Cz \sigma  \dimp^{1/2})^{K/(K-2)} }{n^{-1/2}}
 +R_{n,K},
\end{align*}
where 
$R_{n,K}\leq C\sqrt{{p}/{n}}  \Cz^{2}\sigma^{K+1}\Cx^{K-1} {C}_{x,K}$, and ${C}_{x,K}$ is defined in (B.27); 
a detailed definition of $R_{n,K}$ is given in (B.9), (B.10) (Section B.1 in the supplement \citep{Zhilova2016supp}).
\end{theorem}

\begin{remark}
\label{remark:ebootstr_ratio}
The error term $ \Deltas_{\BCl,iid}$ in the above result consists of two parts: one part corresponds to the higher-order Berry--Esseen type inequalities, another part $R_{n,K}$ comes from concentration bounds for higher-order empirical moments of $X_{i}$. 
If the ratio $\dimp^{K/(K-2)}/{n}$ is small, then the first part is small as well. 
Furthermore, $\forall K\geq 3$ $\dimp^{K/(K-2)}/{n} \geq \dimp/n$.  In Lemma \ref{lemma:lowerboundbootst} (in Section \ref{sect:someremarks}), we consider an example where the condition $\dimp/n= o(1)$ for $n\to \infty$ is required for the bootstrap consistency.
\end{remark}

In the following theorem we study accuracy of the Efron's bootstrap procedure for the  Smooth Function Model introduced by \cite{Bhattacharya1978validity} and \cite{Hall1992bootstbook} (Chapter 2.4). In this model the object of interest is $f(\mu)$, where $f:\R^{\dimp}\mapsto \R$ is a smooth function and $\mu$ is an unknown expected value if $X_{i}$. The bootstrap estimators allow to approximate $f(\Xbar)-f(\mu)$ in distribution, and,  therefore, to establish a confidence set for $f(\mu)$. This also includes the case, when we aim at constructing a confidence set for $\mu$ in the form $f(\Xbar-\mu)$. 
Consider i.i.d. $X_{1},\dots,X_{n}\in \R^{\dimp}$ with mean $\mu$ and sub-Gaussian tail behavior, i.e. condition \eqref{cond:subGauss1} holds for $X_{i}-\mu$. 
 Let $f:\R^{\dimp}\mapsto \R$ be at least twice continuously differentiable function, s.t.  $\forall h\in \R^{\dimp}$ $\sup_{x\in \R^{\dimp}}|f^{(2)}(x)h^{2}|\leq C_{f,2} \|h\|^{2}$ for some constant $C_{f,2}>0$. Assume also that  $f^{\prime}(\mu)\neq 0$, and $\|f^{\prime} (\mu)\|>C_{f,l} \sqrt{\dimp}$ for some constant $C_{f,l}>0$.
 Denote the resampled i.i.d. data $\Xs_{1},\dots,\Xs_{n}$ and the bootstrap empirical mean as follows:
\begin{gather*}
\Pb(\Xs_{i}=X_{j})=1/n \quad \forall i,j=1,\dots,n,\text{ and }
\Xbars\eqdef n^{-1}{{\textstyle\sum\nolimits_{i=1}^{n}}} \Xs_{i}.
\end{gather*}
Theorem \ref{theor:SFMiid} shows that the c.d.f. of $f(\bar{X}) - f (\mu)$ is uniformly well approximated by the c.d.f. of $f(\Xbars)-f(\bar{X})$ conditioned on $\{X_{i}\}_{i=1}^{n}$.
\begin{theorem}[Accuracy of the bootstrap for the Smooth Function Model]
\label{theor:SFMiid}
Let the above assumptions  and conditions of Theorem \ref{theorem:E_bootstr_l2_accur} be fulfilled. It holds with probability $\geq 1-6\ex^{-x}$ for $x>0$:
\begin{align*}
&\hspace{-1.4cm}\sup\nolimits_{t\in \R}
\left|
 \P\left(f(\bar{X}) - f (\mu) \leq t\right)
-
\Pb\bigl(f(\Xbars)-f(\bar{X})  \leq t\bigl)
\right|
\\\leq \Deltas_{f,iid}
& \eqdef 
{2C_{f,2} \Cz\sigma^{2}\tilde{C}_{x,2}}{C_{f,l}^{-1}}({\dimp}/{n})^{1/2}
+
 { \CBEBiid \Cz^{K/(K-2)} C_{M,K}} {n^{-1/2}}
\\&
+\,\CBEBiid(2\tilde{C}_{x,K})^{1/(K-2)}\left\{\Cz \sigma \right\}^{K/(K-2)}{n^{-1/2}} +  R_{1,n,K},
\end{align*}
where $C_{M,K}\eqdef  \left[\|\E(X_{1}-\mu)^{K}\|+\|\E (Y_{1}-\mu)^{K}\|\right]^{{1}/{(K-2)}}$, the term $\tilde{C}_{x,K}$ is described in the previous statement, and $R_{1,n,K}\leq Cn^{-1/2}  \Cz^{2}\sigma^{K+1}\Cx^{K-1} {C}_{x,K}$ is defined in (B.14) and (B.14) (Section B.1 in the supplement \citep{Zhilova2016supp}).  
\end{theorem}
\begin{corollaryT}
\label{corol1}
 Consider the following upper quantile functions of the bootstrap approximations: $\Quants_{2}(\alpha)\eqdef \inf \left\{t \in \R: \Pb\left(\|\Ss_{n}\|>t\right)\leq \alpha \right\},$ $\Quants_{f}(\alpha)\eqdef \inf \left\{t \in \R: \Pb\bigl(f(\Xbars)-f(\bar{X})>t\bigr)\leq \alpha \right\}$ for  \(\alpha \in (0,1)\). Let $x>0$.  Theorems \ref{theorem:E_bootstr_l2_accur} and \ref{theor:SFMiid} imply the following two bounds:
\begin{align*}
\left|\P\bigl(\|S_{n}\|> \Quants_{2}(\alpha) \bigr)-\alpha\right|
&\leq 2\Deltas_{\BCl,iid}+6\ex^{-x},\\
\left|
 \P\left(f(\bar{X}) - f (\mu) >  \Quants_{f}(\alpha)\right)
-
\alpha
\right|
&\leq 
2\Deltas_{f,iid}+6\ex^{-x}.
\end{align*}
\end{corollaryT}

\subsection{Weighted bootstrap}
\label{SECT:WB}
Let $X_{1},\dots,X_{n}$ be independent random vectors with $\Sigma_{i}\eqdef \Var(X_{i})\in \SPDp$, let also $\{X_{i}\}_{i=1}^{n}$ be sub-Gaussian, i.e. it holds for some $\sigma_{i}^{2}>0$, $\forall\,\alpha \in \R^{\dimp}$, and $\forall\, i=1,\dots,n$
$\E\bigl\{\exp(\alpha^{\T}X_{i})\bigr\}\leq
\exp \left(\|\alpha\|^{2}\sigma_{i}^{2}/2\right).$
Denote $\bar{\sigma}^{2}_{k}\eqdef n^{-1} \sum\nolimits_{i=1}^{n}\sigma_{i}^{2k}$. 
Assume that there exist i.i.d. random vectors $Y_{1},\dots,Y_{n}$ satisfying \eqref{eq:mom_2} and \eqref{cond:XiYi2} for $K=4$.  Assume also that p.d.f.-s of $X_{i}$ are bounded with a constant $c_{f}>0$. 
The bootstrap random weights \(\veps_{1},\dots,\veps_{n}\), are taken as in \eqref{def:wb_eps_intro}. These are some examples of such random weights  (here \(z_{i}\sim \mathcal{N}(0,1),\) independent of \(e_{i}, c_{i}, b_{i}\)): $(1-2^{-2/3})^{1/2}z_{i}+2^{-1/3}\left(e_{i}-1\right)$ for \(e_{i}\sim exp(1)\); $2^{-1/2}z_{i}+2^{-1}( c_{i}-1)$ for $c_{i}\sim\chi_{1}^{2}$; $(1-3^{-2/3})^{1/2}z_{i}+3^{-1/3}2(b_{i}-0.5)$ for $b_{i}\sim Bernoulli(0.5)$. 
More examples of the bootstrap weights satisfying \eqref{def:wb_eps_intro}  can be found in the works of \cite{Liu1988bootstr}  and \cite{Mammen1993bootstrap}.

The weighted bootstrap approximation of the sum $S_n$ is $\Sb_{n}\eqdef n^{-1/2}{{\textstyle\sum\nolimits_{i=1}^{n}}}X_{i} \veps_{i}.$  The probability distribution of $\Sb_n$ is taken conditioned on \(\{X_{i}\}_{i=1}^{n}\). 
\begin{theorem}[Accuracy of the weighted bootstrap for $S_{n}$ on the set $\BCl$]
\label{theorem:W_bootstr_l2_accur}
Let the above conditions be fulfilled, 
then it holds with probability $\geq 1-6\ex^{-x}$ for $x>0$:
\begin{align*}
&  \sup\nolimits_{B\in \BCl}\left|
 \P\left(S_{n}\in B \right)
-
\Pb\bigl(\Sb_{n}\in B \bigl)
\right|
\\&\leq 
\Deltab_{\BCl,w,ind}
\eqdef \CBEBniidw
\bigl\{
{ \CzL^{4}{{\textstyle\sum\nolimits_{i=1}^{n}}}
\E\left(\|X_{i}\|^{4}+\|Y_{i}\|^{4}\right)
}/{n^{2}}
\bigr\}^{1/5}
+ R_{2,3},
\end{align*}
where for  $\dimp \leq C \sqrt{n}$  it holds 
$R_{2,3}\leq  C(\CzL\vee  \bar{\sigma}_{1}^{4}\tilde{C}_{x,4} \{1+\E(\vepst_{i}^{4})\})({p}/{\sqrt{n}})^{1/3}$;  a detailed definition of $R_{2,3}$ is given in (B.17), and constant $\CBEBniidw>0$ is defined in (B.16) (Section B.2 in the supplement \citep{Zhilova2016supp}).
\end{theorem}
\begin{remark}
\label{remark:WbCz}
Lemma \ref{lemma:Yexist3} implies (see also Remark \ref{remark:Czdepend}) that for the case $K=4$, $\CzL$ in Theorem \ref{theor:NcBE_noniid} can be taken as a generic constant independent of the dimension $\dimp$.
The result in Theorem \ref{theorem:W_bootstr_l2_accur} relies on the Berry--Esseen type inequality in Theorem \ref{theor:NcBE_noniid}, and, therefore,  we can take $\CzL=const$  in the above statement. 
\end{remark}
\begin{corollaryT}
\label{coroll2}
 Consider the following upper quantile function of the approximating sum obtained using the weighted bootstrap: $\Quantb_{2,w}(\alpha)\eqdef \inf \{t \in \R: \Pb\left(\|\Sb_{n}\|>t\right)\leq \alpha \}$,  \(\alpha \in (0,1)\). Theorem \ref{theorem:W_bootstr_l2_accur} 
implies the following bound
\begin{align*}
\bigl|\P\bigl(\|S_{n}\|> \Quantb_{2,w}(\alpha) \bigr)-\alpha\bigr|
&\leq 2\Deltab_{\BCl,w,ind}+6\ex^{-x} \text{ for } x>0.
\end{align*}
\end{corollaryT}
\subsection{Some remarks about accuracy of the bootstrap procedures}
\label{sect:someremarks}
\begin{remark}[Theorems \ref{theorem:E_bootstr_l2_accur}, \ref{theorem:W_bootstr_l2_accur} in the asymptotic form]
\label{remark:thb1} If the conditions of Theorem \ref{theorem:E_bootstr_l2_accur} 
 are fulfilled and $\Cz$ is dimension-free, then taking $x=\log(2n)$, using the Borel-Cantelli lemma, and the deviation inequality for $\|X\|^{2}$ by \cite{Hsu2012tail} (see also Section B.4 in the supplement \citep{Zhilova2016supp}), we have with probability one
 \begin{align*}
& \sup\nolimits_{B\in \BCl}\left|
 \P\left(S_{n}\in B \right)
-
\Pb\bigl(\Ss_{n}\in B \bigl)
\right|
\\
&=  O\bigl(\{{\dimp^{K/(K-2)}/n}\}^{1/2}\{1+\log (n)/\dimp\}^{\frac{K}{2(K-2)}}+ (\dimp/n)^{1/2}\{1+\log(Kn)/\dimp\}^{\frac{(K-1)}{2}}\bigr)
\end{align*}
for $n\to \infty$ and $K\geq 3$.
Similarly,  given the conditions of Theorem  \ref{theorem:W_bootstr_l2_accur}, it holds with probability one
 \begin{align*}
& \sup\nolimits_{B\in \BCl}\left|
 \P\left(S_{n}\in B \right)
-
\Pb\bigl(\Sb_{n}\in B \bigl)
\right|
\\&= O\bigl(({p}/{\sqrt{n}})^{1/3}\{1+\log(n)/\dimp^{7/6}+(\dimp^{2}/n)^{1/5}\{1+\log(n)/\dimp\}^{2/5}\}
\\&\hspace{1cm}+
({p}/{\sqrt{n}})^{2/3}\{1+\log(n)/\dimp\}^{2}
 \bigr) \text{ for $\dimp \leq C n$ and $n\to \infty$.}
\end{align*}
\end{remark}

Theorem \ref{theorem:W_bootstr_l2_accur} implies that if the ratio $\dimp^{2}/{n}$ (up to  $\log n$) is small, then the weighted bootstrap approximation has a good accuracy. Lemma \ref{lemma:lowerboundbootst} below shows that the condition $\dimp^{2}/n= o(1)$ for $n\to \infty$ is necessary for the weighted bootstrap consistency.

\begin{lemma}[Necessary conditions on $\dimp$ and $n$ for the bootstrap consistency]
\label{lemma:lowerboundbootst}
Let $X_{i}\sim \mathcal{N}(0,\Id_{\dimp})$, $i=1,\dots,n$ be i.i.d. random vectors. 
Consider $S_{n}$, $S_{n}^{\ast}$, and  $S_{n}^{\sbt}$ as in Theorems \ref{theorem:E_bootstr_l2_accur} and \ref{theorem:W_bootstr_l2_accur}. Then 
\begin{itemize}
\item[(a)]
$\dimp=o(n)$ for $n\to \infty$ is necessary for $\sup_{x\in \R}|\Ps(\|S_{n}^{\ast}\|^{2} \leq x)-\P(\|S_{n}\|^{2}\leq x) | \overset{\P}{\to} 0$,   
\item[(b)] $\dimp^{2}=o(n)$ for $n\to \infty$ is necessary for 
$ \sup_{x\in \R}|\Ps(\|S_{n}^{\sbt}\|^{2}\leq x)-\P(\|S_{n}\|^{2}\leq x) | \overset{\P}{\to} 0$.
\end{itemize}
\end{lemma}

\begin{remark}
\label{remark:bk}
In Theorem \ref{theorem:W_bootstr_l2_accur}, the bootstrap weights \(\veps_{1},\dots,\veps_{n}\) satisfy the 3-rd moment condition \eqref{def:wb_eps_intro}. This is very similar to taking $K=4$ in Theorem  \ref{theorem:E_bootstr_l2_accur} for the Efron's bootstrap. It is not possible to continue the sequence of moments \eqref{def:wb_eps_intro} like $\E(\veps_{i}^{4})=1, \dots$, since the corresponding Hankel matrix 
$\begin{psmallmatrix}1& 0&1\\ 0& 1&1\\1& 1&1  \end{psmallmatrix}$ fails the criterion for solubility of the Hamburger moment problem 
(see, e.g., \cite{Akhiezer1965classical}). 
Together with Lemma \ref{lemma:lowerboundbootst} and the preceding results in Section \ref{SECT:MAINRES_BOOTSTR}, this implies that under the conditions of Theorem \ref{theorem:E_bootstr_l2_accur}, the Efron's bootstrap yields a better accuracy w.r.t. the ratio between $\dimp$ and $n$, than the considered  weighted bootstrap scheme.
\end{remark}

\begin{remark}
\label{remark:ebootstr_accuracy}
 In Theorems \ref{theorem:E_bootstr_l2_accur}-\ref{theorem:W_bootstr_l2_accur}, 
the sub-Gaussian tail behavior (condition \eqref{cond:subGauss1}) is required in order to apply concentration bounds for the higher-order moments of $X_{i}$ (see Section B.4 in the supplement \citep{Zhilova2016supp}).  In the asymptotic set-up, one can relax this condition, assuming, e.g. boundedness of  the $K$-th moments of $X_{i}, i=1,\dots,n$.
\end{remark}

%
\subsection{Weighted bootstrap for log-likelihood ratio statistics}
\label{sect:wblikelihood}
Here we consider a weighted (or a multiplier)  bootstrap procedure for estimation of quantiles of log-likelihood ratio statistics. Before describing the procedure and formulating a theoretical result, we give some necessary definitions.

Let \(\ys=(y_{1},\dots,y_{n})\) denote the data sample, \(y_{1},\dots,y_{n}\) are i.i.d. random observations from a probability space \((\Omega,\mathcal{F},\P)\). Introduce some known parametric family \(\{\P_{\thetav}\}\eqdef \{\P_{\thetav}\ll \mu_{0},\,\thetav\in \Theta \subseteq \R^{\dimp}\}\), here \(\mu_{0}\) is a \(\sigma\)-finite measure on \((\Omega,\mathcal{F})\) which dominates all \(\P_{\thetav}\) for \(\thetav\in\Theta\). 
The true data distribution \(\P\) is not assumed to belong to the family \(\{\P_{\thetav}\}\), thus our analysis includes the case when the parametric family \(\{\P_{\thetav}\}\) is misspecified. 
\(\{\P_{\thetav}\}\) induces the following (quasi)log-likelihood function for the sample \(\ys\): 
$L(\thetav)=L(\thetav,\ys)\eqdef
\log\left(\frac{d\P_{\thetav}}{d\mu_{0}}(\ys)\right).$
The target parameter \(\thetavs\) is defined by projecting the true probability distribution \(\P\) on the parametric family \(\{\P_{\thetav}\}\), using Kullback-Leibler divergence:
$\thetavs \eqdef \argmin\nolimits_{\thetav\in \Theta}\KL (\P,\P_{\thetav})= \argmax\nolimits_{\thetav\in \Theta} \E L(\thetav).$
The (quasi) maximum likelihood estimate (MLE) is defined as 
$\thetavt \eqdef \argmax\nolimits_{\thetav\in \Theta}  L(\thetav).$
Let \(\QuantL(\alpha)\) denote the upper quantile function of square root of the two times log-likelihood ratio statistic:  $\QuantL(\alpha)\eqdef \inf \bigl\{t \geq 0: \P\bigl(L(\thetavt)-L(\thetav)>t^{2}/2\bigr)\leq\alpha \bigr\}.$ 
\(\QuantL(\alpha)\) is a critical value of the likelihood-based confidence set \(\CS(\alpha)\):
\begin{gather}
\label{def:CSL}
\CS(t)\eqdef \bigl\{\thetav: L(\thetavt)-L(\thetav)\leq t^{2}/2\bigl\},\quad 
\P\left\{\thetavs \in  \CS(\QuantL(\alpha)) \right\}\geq  1-\alpha.
\end{gather}
Probability distribution of \(L(\thetavt)-L(\thetavs)\) depends on the unknown parameter \(\thetavs\) and \(\P,\) hence, in general, quantiles of \(L(\thetavt)-L(\thetavs)\) are also unknown.

Consider the weighted (or the multiplier) bootstrap procedure which allows to estimate
the distribution of \(L(\thetavt)-L(\thetavs)\).  Let \(u_{1},\dots,u_{n}\) be i.i.d. random variables:
\begin{gather*}
u_{i}\eqdef \veps_{i}+1, 
\text{ for \(\veps_{i}\) defined in \eqref{def:wb_eps_intro}, independent of $\ys$.}
\end{gather*}
The bootstrap log-likelihood function \(\Lb(\thetav)\) equals to the initial one \(L(\thetav)\) weighted with the random bootstrap weights \(u_{i}\):
\begin{gather*}
{{\textstyle
\Lb(\thetav)
\eqdef
\sum\nolimits_{i=1}^{n}
\log\left(\frac{d\P_{\thetav}}{d\mu_{0}}(y_{i})\right)u_{i}.}}
\end{gather*}
Recall that \(\Pb(\cdot)\eqdef \P(\cdot\cond \{y_{i}\}_{i=1}^{n})\) and \(\Eb(\cdot)\eqdef \E(\cdot\cond  \{y_{i}\}_{i=1}^{n})\). It holds \(\Eb \Lb (\thetav)= L(\thetav)\), therefore,
$\thetavt= \argmax\nolimits_{\thetav\in \Theta}  L(\thetav) 
=
\argmax\nolimits_{\thetav\in \Theta}  \Eb\Lb(\thetav),$ 
and the MLE \(\thetavt\) can be considered as a bootstrap analogue of the unknown target parameter \(\thetavs\).  The bootstrap likelihood ratio statistic is defined as
\begin{gather*}
\Lb(\thetavbt)-\Lb(\thetavt)\eqdef\sup\nolimits_{\thetav\in\Theta}\Lb(\thetav)-\Lb(\thetavt).
\end{gather*}
\(\Lb(\thetavbt)-\Lb(\thetavt)\) can be computed for each i.i.d. sample of the bootstrap weights \(u_{1},\dots,u_{n}\), thus we can calculate empirical probability distribution function of \(\Lb(\thetavbt)-\Lb(\thetavt)\)  and estimate its quantiles. Denote
\begin{align}
\label{def:QLbalpha}
\QuantLb(\alpha)&\eqdef \inf \bigl\{t \geq 0: \Pb\bigl(\Lb(\thetavbt)-\Lb(\thetav)>t^{2}/2\bigr) \leq  \alpha \bigr\}.
\end{align}
Theorem \ref{theor:bootst_likel} below provides a two-sided bound on the coverage error of  the likelihood  confidence set \eqref{def:CSL} based on the bootstrap quantile \(\QuantLb(\alpha)\).
Let us introduce some additional notation before stating the theorem. Denote \(\ell_{i}(\thetav)\eqdef \log\left(\frac{d\P_{\thetav}}{d\mu_{0}}(y_{i})\right)\), \(d_{0}^{2}\eqdef -\E \ell_{1}^{\prime\prime}(\thetavs)\), here $\ell^{\prime}_{i}(\thetav)\eqdef \nabla_{\thetav}\ell_{i}(\thetav).$ Take $X_{i}\coloneqq d_{0}^{-1}\ell^{\prime}_{i}(\thetavs)$. By previous definitions, such defined $\{X_{i}\}_{i=1}^{n}$ are i.i.d with zero mean. Moreover, if  conditions from Section B.3 in the supplement \citep{Zhilova2016supp} are fulfilled, then $\E(\|X_{i}\|^{4})<\infty$. 
 Let $Y_{1},\dots,Y_{n}$ be i.i.d. vectors meeting conditions  \eqref{eq:mom_2} and \eqref{cond:XiYi2} for $K=4$, and  $\Czxiv\eqdef \|\{\Var(Z_{i})\}^{-1/2}\|$.
  Now we are ready to formulate the following 
\begin{theorem}
\label{theor:bootst_likel}
If the conditions from Section B.3 in the supplement \citep{Zhilova2016supp} are fulfilled, then it holds with probability \(\geq 1-10 \ex^{-\xx}\) for $x>0$
\begin{gather}
\nonumber
\sup\nolimits_{t\geq 0}
\bigl|
\P\bigl\{
L(\thetavt)-L(\thetavs)\leq t
\bigr\}
-
\Pb\bigl\{
\Lb(\thetavbt)-\Lb(\thetavt)\leq t
\bigr\}
\bigr|
\leq \Delta_{L}, \text{ and }\\
\nonumber
\bigl|\P\bigl\{\thetavs \notin  \CS(\QuantLb(\alpha)) \bigl\}-\alpha\bigr|
\leq
2\Delta_{L}+10 \ex^{-\xx}, \text{ where}\\
\label{ineq:DeltaL1}
\Delta_{L}\leq  \CBEBniidw
\left\{
{ \Czxiv^{4} 
\E\left(\|d_{0}^{-1}\ell_{1}^{\prime}(\thetavs)\|^{4}+\|Y_{1}\|^{4}\right)
}/{n}
\right\}^{1/5}
\\\quad +R_{L,2,3}+\Czxiv C(\dimp+\xx){n^{-1/2}} ,
\nonumber
\end{gather}
where for  $\dimp \leq C \sqrt{n}$  it holds 
$R_{L, 2,3}\leq  C(\Czxiv\vee  (\nu\gmu)^{4}\tilde{C}_{x,4} \{1+\E(\vepst_{i}^{4})\})\left({p}/{\sqrt{n}}\right)^{1/3}$; 
 is defined in (B.17);  
a more detailed definition of the error term \(\Delta_{L}\) is given in (B.20), $\CBEBniidw$ is defined in (B.16) (Section B.2 in the supplement \citep{Zhilova2016supp}).
\end{theorem}
\begin{remark}
The third term in bound \eqref{ineq:DeltaL1} comes from Wilks-type approximations for the likelihood ratios $L(\thetavt)-L(\thetavs)$ and \(\Lb(\thetavbt)-\Lb(\thetavt)\) (see the proof in Section B.3 in the supplement \citep{Zhilova2016supp} for more details); the first two terms in \eqref{ineq:DeltaL1} come from the Berry--Esseen type inequality justifying the weighted bootstrap procedure on the set $\BCl$ (Theorem \ref{theorem:W_bootstr_l2_accur}). Due to the assumed sub-Gaussian tail behavior of  $d_{0}^{-1}\ell_{i}^{\prime}(\thetavs)$, the first term  is bounded from above with $C \dimp n^{-1/2}$ with large probability.  Thus, in Theorem \ref{theor:bootst_likel}  both Wilks-type bound and the higher-order Berry--Esseen type inequality yield similar ratios between $\dimp$ and $n$ in the  error of approximation $\Delta_{L}$.
\end{remark}
%
%
\section{Numerical experiments}
\label{sect:numer}
This section presents results of simulation studies, illustrating accuracy of the considered Berry--Esseen bounds and the bootstrap procedures.
\subsection{Berry--Esseen inequality}
Figure \ref{figure:BEp1LognPar} shows the c.d.f.-s of $S_{n}$, $\St_{n}$ and $\mathcal{N}(0,1)$ for the sample size $n=50$, dimension $\dimp=1$ and number $K-1=3$  of equal  moments of $S_{n}$ and $\St_{n}$.  
Similarly Figure \ref{figure:BEp} shows c.d.f.-s of $\|S_{n}\|^{2}$, $\|\St_{n}\|^{2}$ and $\chi^{2}_{\dimp}$ \  for $n=50$, $\dimp =7$ and $K-1=3$. 
Distributions of $X_{i}$ and $Y_{i}$ are described in the bottom of each of the Figures \ref{figure:BEp1LognPar} and \ref{figure:BEp}. The c.d.f.-s are obtained from $15\cdot 10^{3}$ i.i.d. samples. Both figures agree with the theoretical results about the higher order Berry--Esseen bounds: the latter approximation has a better accuracy than the Gaussian approximation.

\begin{figure}[!h]
 \centering
\caption{Distribution functions of $S_{n}$ and $\St_{n}$ for $n=50$, $\dimp=1$, $K=4$.}
\label{figure:BEp1LognPar}
\includegraphics[width=0.68\textwidth]{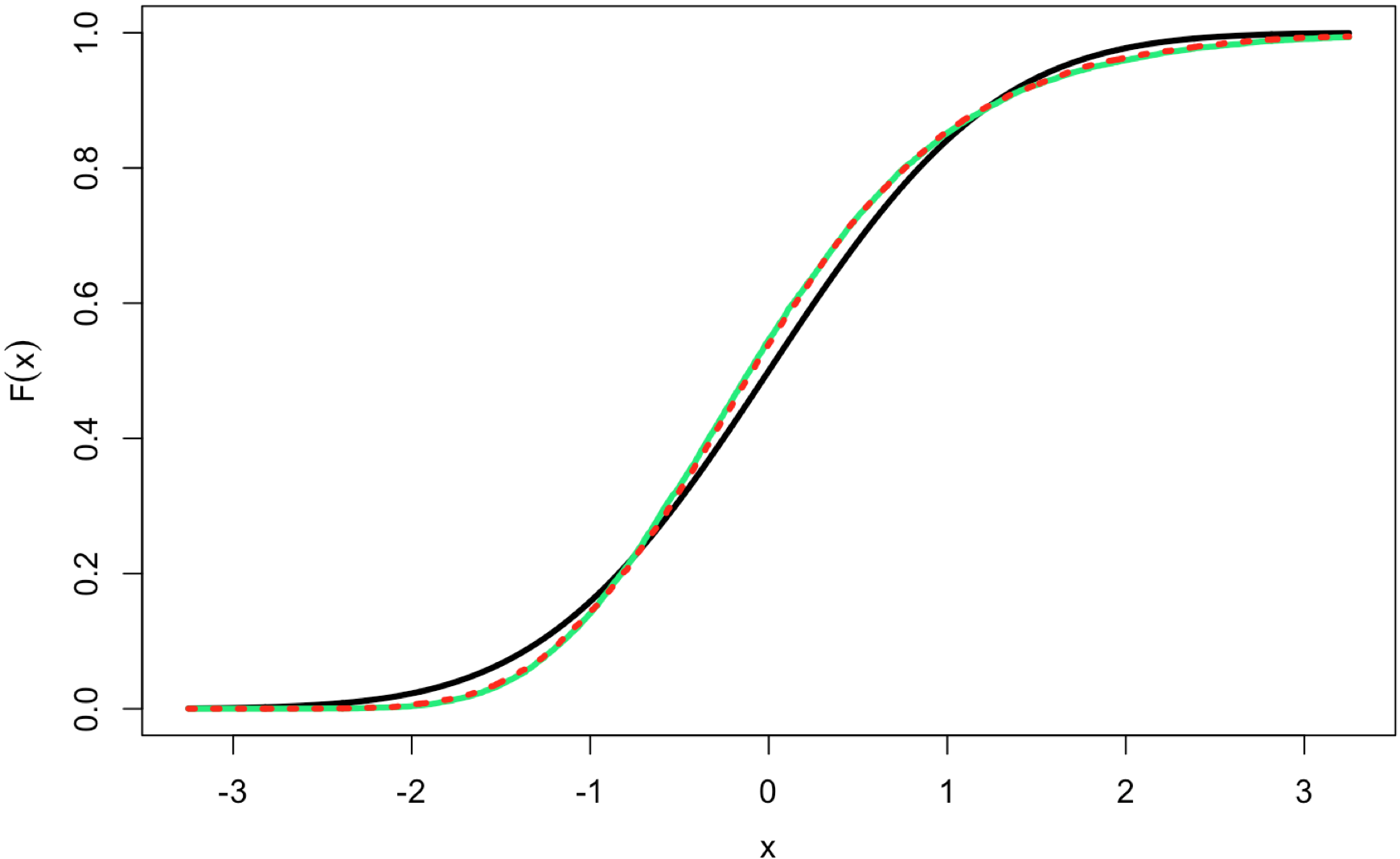}
   \par
  \begin{picture}(0,1)
\thicklines
 \linethickness{0.39mm}
\put(0,2){{\color{black}\line(1,0){15}}}
\end{picture} \hspace{0.6cm}c.d.f. of $\mathcal{N}(0,1);$\ \ 
\begin{picture}(0,1)
\thicklines
 \linethickness{0.39mm}
\put(0,2){{\color{green}\line(1,0){15}}}
\end{picture} \hspace{0.6cm}c.d.f. of $S_{n}$ for $X_{i}\sim (\ln\mathcal{N}(0,1)-1.649)/ 2.161$;
   \par
   \begin{picture}(0,1)
\thicklines
 \linethickness{0.39mm}
\put(0,2){{\color{red}\line(1,0){3}}}
\put(6,2){{\color{red}\line(1,0){3}}}
\put(12,2){{\color{red}\line(1,0){3}}}
\end{picture} \hspace{0.6cm}c.d.f. of $\St_{n}$ for $Y_{i}=\Zy_{i} +\Uy_{i}$, $\Uy_{i} \sim (Pareto(0.5,4.1)- 0.661)\cdot4.333$.
\end{figure}

\begin{figure}[!h]
\centering
\caption{Distribution functions of $\|S_{n}\|^{2}$ and $\|\St_{n}\|^{2}$ for $n=50$, $\dimp=7$, $K=4$.}
\label{figure:BEp}
\includegraphics[width=0.68\textwidth]{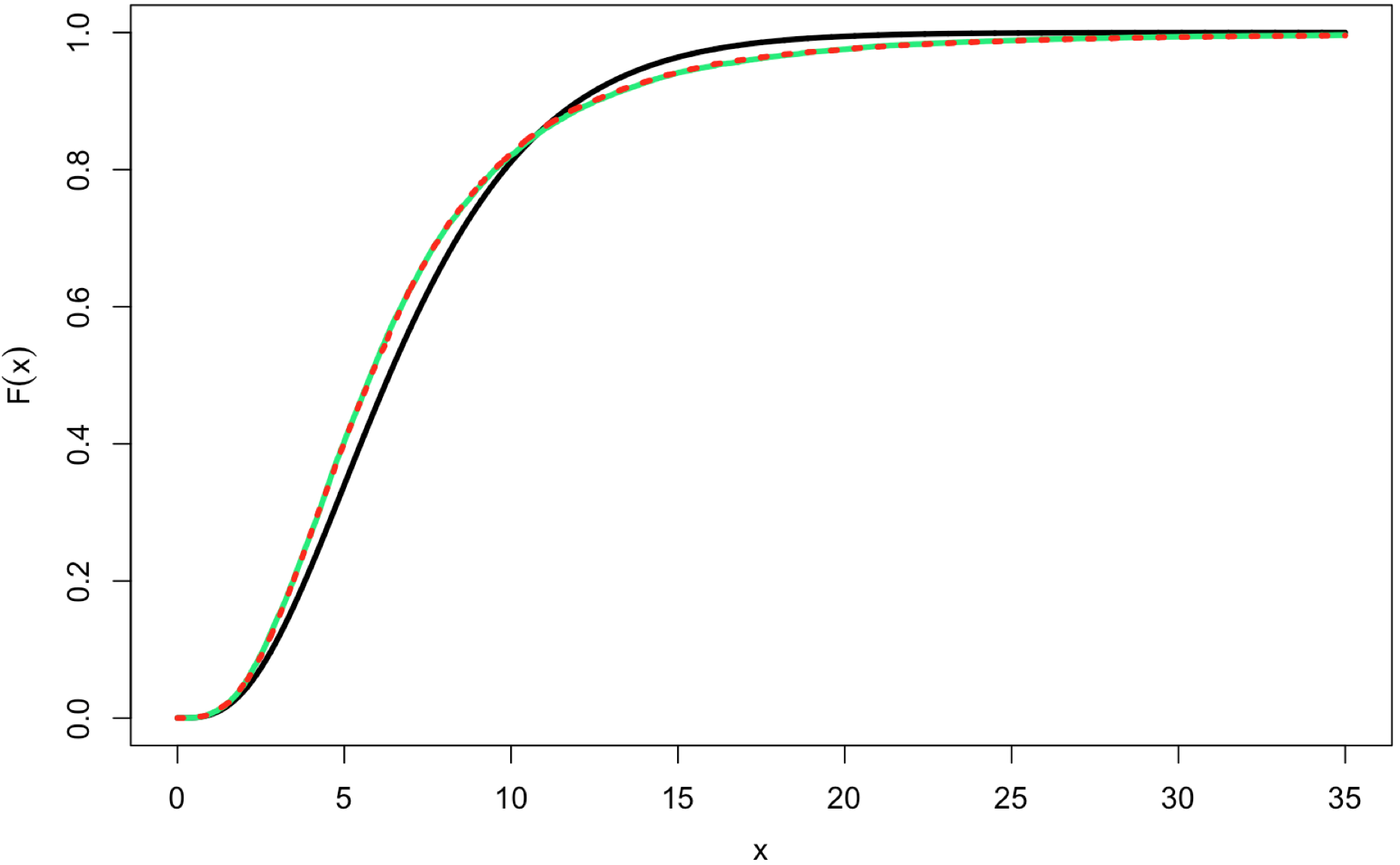}
   \begin{tabular}{ c l }
\begin{picture}(0,1)
\thicklines
 \linethickness{0.39mm}
\put(0,2){{\color{black}\line(1,0){15}}}
\end{picture}& \hspace{0.39cm}c.d.f. of $\chi^{2}_{\dimp};$\\
\begin{picture}(0,1)
\thicklines
 \linethickness{0.39mm}
\put(0,2){{\color{green}\line(1,0){15}}}
\end{picture} & \hspace{0.39cm}c.d.f. of $\|S_{n}\|^{2}$ for $X_{i}=(X_{i,1},\dots,X_{i,\dimp})^{\T},$ $X_{i,j}$ are i.i.d., \\
&
 \hspace{0.5cm} 
$X_{i,j}\sim  (\ln\mathcal{N}(0,1)-1.649)/ 2.161$;\\
\begin{picture}(0,1)
\thicklines
 \linethickness{0.39mm}
\put(0,2){{\color{red}\line(1,0){3}}}
\put(6,2){{\color{red}\line(1,0){3}}}
\put(12,2){{\color{red}\line(1,0){3}}}
\end{picture} & \hspace{0.39cm}c.d.f. of $\|\St_{n}\|^{2}$ for $Y_{i}=(\Zy_{i,1}+\Uy_{i,1},\dots,\Zy_{i,\dimp}+\Uy_{i,\dimp})^{\T},$ $\Uy_{i,j}$ are i.i.d., \\
&
 \hspace{0.5cm}  $\Uy_{i,j}\sim (Pareto(0.5,4.1)- 0.661)\cdot4.334$.
   \end{tabular}
\end{figure}

\subsection{Bootstrap}
Here we examine accuracy of the bootstrap procedures for $\|S_{n}\|$ (described in Section \ref{SECT:MAINRES_BOOTSTR}) by computing coverage probabilities using bootstrap quantiles $\Quantb_{2}(\alpha)$. All the results are collected in Table \ref{tab:bootstrcoverage}. Columns $n$, $\dimp$, \({\LL(\epsc_{i})}\),  $\LL(X_{i,j})$  show the sample size, the dimension, the distribution of the bootstrap weights $\epsc_{i}$, and the distribution of  $X_{i,j}$, where i.i.d. coordinates $X_{i,j}$ are s.t.  $X_{i}=(X_{i,1},\dots,X_{i,\dimp})^{\T}$. Nominal coverage probabilities $1-\alpha$ are given in the second row \(0.975, 0.95, \) \(0.90, 0.85, \dots, 0.50\). All the rest numbers represent frequencies of the event  $\{\|S_{n}\|\leq \Quantb(\alpha)\}$, computed for different $n$, $\dimp$, $\alpha$, \({\LL(\epsc_{i})}\), and $\LL(X_{i,j})$, from $7\cdot 10^{3}$ i.i.d. samples $\{X_{i}\}_{i=1}^{n}$ and $\{\epsc_{i}\}_{i=1}^{n}$. We consider three types of the bootstrap weights: first one  
$\epsc_{i}=z_{i}+u_{i}$, with
 $u_{i}\sim (Bernoulli(b)-b)\sigma_{u}$, $b=0.276,\sigma_{u}\approx 2.235$, and $z_{i}\sim \mathcal{N}(0,\sigma_{z}^{2})$, $\sigma_{z}^{2}\approx0.038$, for this case $\E \epsc_{i}=0$, $\E(\epsc_{i}^{2})=\E(\epsc_{i}^{3})=1$, therefore $\epsc_{i}$ meet conditions \eqref{def:wb_eps_intro}. The second type is $\epsc_{i}\sim\mathcal{N}(0,1)$, in this case $\E(\epsc_{i}^{3})\neq1$, and the approximation accuracy corresponds to the classical normal approximation with a larger error term. In this numerical experiment we check, whether the  additional condition $\E(\epsc_{i}^{3})=1$ improves numerical performance of the weighted bootstrap for $\|S_{n}\|$. The third type of the weights $Multinom.$ corresponds to the multinomial distribution $Multinomial(n;1/n,\dots,1/n)$, i.e., to the classical Efron's bootstrap scheme. Table \ref{tab:bootstrcoverage} confirms the higher-order properties of the bootstrap schemes for most of the computed coverage probabilities.
 
 \begin{table}[!h]
\caption{
Coverage probabilities $\P\bigl(\|S_{n}\|\leq Q^{\circ}_{2}(\alpha)\bigr)$}
\vspace{-0.5cm}
\label{tab:bootstrcoverage}
\begin{center}
\smallskip\noindent
\scalebox{0.87}{
\hspace{-0.86cm}
  \begin{tabular}{|c|c|c|c| c|c|c|c|c |c|c|c| }  \cline{5-12}
  \multicolumn{4}{c|}{}&\multicolumn{8}{c|}{{{Confidence levels}}}\\\hline
  \ \(n\)\
&\ \(\dimp\)\ &\(\Scale[0.9]{\LL(X_{i,j})}\) &\({\LL}(\epsc_{i})\) &$\Scale[1]{\mathbf{0.975}}$ &\(\Scale[1]{\mathbf{0.95}}\) &$\Scale[1]{\mathbf{0.90}}$& \(\Scale[1]{\mathbf{0.85}}\)& \(\Scale[1]{\mathbf{0.80}}\)& \(\Scale[1]{\mathbf{0.70}}\)&\(\Scale[1]{\mathbf{0.60}}\)&\(\Scale[1]{\mathbf{0.50}}\)\\
\hline\hline
 \multirow{9}{*}{\(\Scale[0.9]{400}\)}
&\multirow{9}{*}{\(\Scale[0.9]{40}\)}
&
\multirow{3}{*}{\(\Scale[0.9]{\chi_{1}^{2}-1}\)}
&\(\Scale[0.9]{\LL(z_{i}+u_{i})}\)
 & \(0.982\)& \(0.957\) & \(0.910\) &\(0.855\) &\(0.804\) &  \(0.701\) &$0.595$&$0.491$
  \\ \cline{4-12}
 &
 &
 & \(\Scale[0.9]{\mathcal{N}(0,1)}\)
 & \(0.984\)& \(0.960\) & \(0.914\) &\(0.862\) &\(0.810\) &  \(0.704\)& $0.597$&$0.495$
  \\ \cline{4-12}
 &
 &
 & \(\Scale[0.9]{Multinom.}\)
 & \(0.983\)& \( 0.960\) & \(0.916\) &\(0.864 \) &\( 0.812\) &  \(0.702\)&$0.593$&$0.492$
 \\ \cline{3-12}
 &
 &
 \multirow{3}{*}{\(\Scale[0.9]{Pareto^{*}}\)}
 & \(\Scale[0.9]{\LL(z_{i}+u_{i})}\)
 & \(0.984\)& \(0.964\) & \(0.917\) &\(0.865\) &\(0.813\) &  \(0.704\)&$0.593$& $0.490$
 \\ \cline{4-12}
 &
 &
 & \(\Scale[0.9]{\mathcal{N}(0,1)}\)
 & \(0.986\)& \(0.972\) & \(0.925\) &\(0.873\) &\( 0.821\) &  \(0.707\)&$0.589$&$0.480$
   \\ \cline{4-12}
 &
 &
 & \(\Scale[0.9]{Multinom.}\)
 & \(0.989\)& \(0.969 \) & \(0.927\) &\( 0.875 \) &\( 0.822\) &  \(0.710\)&$0.591$&$0.475$
 \\ \cline{3-12}
 &
 &
 \multirow{3}{*}{\(\Scale[0.9]{\ln\mathcal{N}^{\ast}(2.5)}\)}
 & \(\Scale[0.9]{\LL(z_{i}+u_{i})}\)
 & \(0.996\)& \(0.987\) & \(0.958\) &\(0.912\) &\(0.863\) & \(0.711\)&$0.555$& $0.416$
 \\ \cline{4-12}
 &
 &
 & \(\Scale[0.9]{\mathcal{N}(0,1)}\)
 & \(0.998\)& \(0.992\) & \(0.973\) &\(0.934\) &\(0.880\) &  \(0.725\)&$0.543$& $0.387$
  \\ \cline{4-12}
 &
 &
 & \(\Scale[0.9]{Multinom.}\)
 & \(0.998\)& \( 0.994 \) & \(0.967\) &\(0.914 \) &\( 0.847\) &  \(0.678\)&$0.511$&$0.390$
  \\ \hline\hline
  \multirow{9}{*}{\(\Scale[0.9]{150}\)}
&\multirow{9}{*}{\(\Scale[0.9]{15}\)}
&
\multirow{3}{*}{\(\Scale[0.9]{\chi_{1}^{2}-1}\)}
&\(\Scale[0.9]{\LL(z_{i}+u_{i})}\)
 & \(0.983\)& \(0.958\) & \(0.907\) &\(0.855\) &\(0.807\) &  \(0.703\)&$0.596$&$0.492$
  \\ \cline{4-12}
 &
 &
 & \(\Scale[0.9]{\mathcal{N}(0,1)}\)
  & \(0.986\)& \(0.965\) & \(0.915\) &\(0.863\) &\(0.811\) &  \(0.706\) &$0.595$&$0.485$
   \\ \cline{4-12}
 &
 &
 & \(\Scale[0.9]{Multinom.}\)
 & \(0.986\)& \(0.964\) & \(0.912\) &\(0.855 \) &\( 0.826\) &  \(0.683\)&$ 0.576$&$0.468$
 \\ \cline{3-12}
 &
 &
 \multirow{3}{*}{\(\Scale[0.9]{Pareto^{*}}\)}
 & \(\Scale[0.9]{\LL(z_{i}+u_{i})}\)
 & \(0.985\)& \( 0.967\) & \(0.920\) &\(0.869\) &\(0.807\) &  \(0.695\)&$0.585$&$0.472$
 \\ \cline{4-12}
 &
 &
 & \(\Scale[0.9]{\mathcal{N}(0,1)}\)
 & \(0.990\)& \(0.974\) & \(0.931\) &\(0.882\) &\(0.820\) &  \(0.697\)&$0.580$&$ 0.459$
   \\ \cline{4-12}
 &
 &
 & \(\Scale[0.9]{Multinom.}\)
 & \(0.988\)& \(0.973 \) & \(0.926\) &\(0.866\) &\(0.804\) &  \(0.683\)&$0.561$&$0.449$
 \\ \cline{3-12}
 &
 &
 \multirow{3}{*}{\(\Scale[0.9]{\ln\mathcal{N}^{\ast}(2.5)}\)}
 & \(\Scale[0.9]{\LL(z_{i}+u_{i})}\)
 & \(0.992\)& \( 0.978\) & \(0.936\) &\(0.889\) &\(0.830\) &  \(0.674\)&$0.514$&$0.386$
 \\ \cline{4-12}
 &
 &
 & \(\Scale[0.9]{\mathcal{N}(0,1)}\)
 & \(0.995\)& \(0.987\) & \(0.956\) &\(0.910\) &\(0.851\) &  \(0.693\)&$0.507$&$0.357$
   \\ \cline{4-12}
 &
 &
 & \(\Scale[0.9]{Multinom.}\)
 & \(0.996\)& \(0.987 \) & \( 0.949\) &\(0.891 \) &\(0.818\) &  \( 0.656\)&$0.494$&$0.349$
  \\ \hline\hline
  \multirow{9}{*}{\(\Scale[0.9]{50}\)}
&\multirow{9}{*}{\(\Scale[0.9]{5}\)}
&
\multirow{3}{*}{\(\Scale[0.9]{\chi_{1}^{2}-1}\)}
&\(\Scale[0.9]{\LL(z_{i}+u_{i})}\)
 & \(0.985\)& \(0.961\) & \(0.906\) &\(0.853\) &\( 0.798\) &  \(0.688\) &$0.582$&$0.483$
 
  \\ \cline{4-12}
 &
 &
 & \(\Scale[0.9]{\mathcal{N}(0,1)}\)
 & \(0.988\)& \(0.969\) & \(0.915\) &\(0.862\) &\(0.804\) &  \(0.688\)&$ 0.572$&$0.466$
   \\ \cline{4-12}
 &
 &
 & \(\Scale[0.9]{Multinom.}\)
 & \(0.985\)& \(0.959\) & \( 0.900\) &\(0.845\) &\(0.785\) &  \(0.667\)&$0.555$&$0.454$
 \\ \cline{3-12}
 &
 &
 \multirow{3}{*}{\(\Scale[0.9]{Pareto^{*}}\)}
 & \(\Scale[0.9]{\LL(z_{i}+u_{i})}\)
 & \(0.983\)& \( 0.960\) & \(  0.911\) &\( 0.852\) &\( 0.795\) &  \( 0.675\)&$0.560$&$0.460$
 \\ \cline{4-12}
 &
 &
 & \(\Scale[0.9]{\mathcal{N}(0,1)}\)
 & \(0.986\)& \(0.967\) & \( 0.923\) &\(0.866\) &\(0.804\) &  \(0.673\)&$0.546$&$0.432$
   \\ \cline{4-12}
 &
 &
 & \(\Scale[0.9]{Multinom.}\)
 & \(0.984\)& \(0.960\) & \(0.908\) &\(0.844\) &\(0.777\) &  \(0.650\)&$0.528$&$0.424$
 \\ \cline{3-12}
 &
 &
 \multirow{3}{*}{\(\Scale[0.9]{\ln\mathcal{N}^{\ast}(1.5)}\)}
 & \(\Scale[0.9]{\LL(z_{i}+u_{i})}\)
 & \(0.977\)& \(0.956\) & \( 0.903\) &\(0.839\) &\(0.775\) & $0.638$&$0.532$&  \(0.411\)
 \\ \cline{4-12}
 &
 &
 & \(\Scale[0.9]{\mathcal{N}(0,1)}\)
 & \(0.983\)& \( 0.965\) & \(0.920\) &\( 0.858\) &\(0.795\) &  \(0.645\)&$ 0.506$&$0.382$

 \\ \cline{4-12}
 &
 &
 & \(\Scale[0.9]{Multinom.}\)
 & \(0.980\)& \(0.958\) & \(0.901\) &\(0.833\) &\(0.763\) &  \(0.621\)&$0.493$&$0.383$
  \\ \hline
   \end{tabular}
   }
\end{center}
$\textit{Here $Pareto^{\ast}$ and $\ln\mathcal{N}^{\ast}(\sigma^{2})$ denote zero mean distributions $Pareto(0.5,4.1)- 0.661$}$ $\textit{and $\ln\mathcal{N}(0,\sigma^{2})-\ex^{\sigma^{2}/2}$ correspondingly.}$
\end{table}

%
%
\section*{Acknowledgments}
I am thankful to Prof. Vladimir Koltchinskii for valuable comments; I would like to thank the Editor, an Associate Editor, and anonymous Referees for careful reading of the manuscript and useful remarks which helped to improve the paper.

\begin{supplement}
\label{suppA}
\stitle{Supplement to ``Nonclassical Berry--Esseen inequalities and accuracy of the bootstrap''}
\sdescription{The supplementary material contains proofs of the results from Sections \ref{SECT:MAINRES_BE} and \ref{SECT:MAINRES_BOOTSTR}.}
\end{supplement}
\bibliographystyle{apalike}
\bibliography{references_1,ref_appr.bib}

\begin{thebibliography}{}

\bibitem[Akhiezer, 1965]{Akhiezer1965classical}
Akhiezer, N.~I. (1965).
\newblock {\em The classical moment problem and some related questions in
  analysis}, volume~5.
\newblock Oliver \& Boyd.

\bibitem[Arlot et~al., 2010]{ArlotBlanch2010}
Arlot, S., Blanchard, G., and Roquain, E. (2010).
\newblock Some nonasymptotic results on resampling in high dimension. {I}.
  {C}onfidence regions.
\newblock {\em The Annals of Statistics}, 38(1):51--82.

\bibitem[Barbe and Bertail, 1995]{Barbe1995weighted}
Barbe, P. and Bertail, P. (1995).
\newblock {\em The weighted bootstrap}, volume~98.
\newblock Springer.

\bibitem[Bentkus, 2003]{Bentkus2003BE}
Bentkus, V. (2003).
\newblock On the dependence of the {B}erry--{E}sseen bound on dimension.
\newblock {\em Journal of Statistical Planning and Inference}, 113(2):385--402.

\bibitem[Bentkus, 2005]{Bentkus2005Lyapunov}
Bentkus, V. (2005).
\newblock A {L}yapunov-type bound in ${R}^{d}$.
\newblock {\em Theory of Probability \& Its Applications}, 49(2):311--323.

\bibitem[Bhattacharya and Ghosh, 1978]{Bhattacharya1978validity}
Bhattacharya, R.~N. and Ghosh, J.~K. (1978).
\newblock On the validity of the formal {E}dgeworth expansion.
\newblock {\em Ann. Statist}, 6(2):434--451.

\bibitem[Bickel and Freedman, 1983]{BickelFreedman1983bootstrapping}
Bickel, P.~J. and Freedman, D.~A. (1983).
\newblock Bootstrapping regression models with many parameters.
\newblock {\em Festschrift for Erich L. Lehmann}, pages 28--48.

\bibitem[Boyd and Vandenberghe, 2004]{Boyd2004convex}
Boyd, S. and Vandenberghe, L. (2004).
\newblock {\em Convex {O}ptimization}.
\newblock Cambridge University Press.

\bibitem[Chatterjee and Bose, 2005]{ChattBose2005generalized}
Chatterjee, S. and Bose, A. (2005).
\newblock Generalized bootstrap for estimating equations.
\newblock {\em The Annals of Statistics}, 33(1):414--436.

\bibitem[Chernozhukov et~al., 2013]{ChernoMultBoot}
Chernozhukov, V., Chetverikov, D., and Kato, K. (2013).
\newblock Gaussian approximations and multiplier bootstrap for maxima of sums
  of high-dimensional random vectors.
\newblock {\em The Annals of Statistics}, 41(6):2786--2819.

\bibitem[Chernozhukov et~al., 2017]{Chernozhukov.et.al.(2014a)}
Chernozhukov, V., Chetverikov, D., and Kato, K. (2017).
\newblock Central limit theorems and bootstrap in high dimensions.
\newblock {\em The Annals of Probability}, 45(4):2309--2352.

\bibitem[Curto and Fialkow, 1991]{Curto1991recursiveness}
Curto, R.~E. and Fialkow, L.~A. (1991).
\newblock Recursiveness, positivity, and truncated moment problems.
\newblock {\em Houston Journal of Mathematics}, 17(4):603--635.

\bibitem[Curto and Fialkow, 2008]{Curto2008analogue}
Curto, R.~E. and Fialkow, L.~A. (2008).
\newblock An analogue of the {R}iesz--{H}aviland theorem for the truncated
  moment problem.
\newblock {\em Journal of Functional Analysis}, 255(10):2709--2731.

\bibitem[Efron, 1979]{Efron1979bootstrap}
Efron, B. (1979).
\newblock {Bootstrap methods: another look at the jackknife}.
\newblock {\em The Annals of Statistics}, pages 1--26.

\bibitem[Efron and Tibshirani, 1994]{Efron1994introduction}
Efron, B. and Tibshirani, R.~J. (1994).
\newblock {\em An introduction to the bootstrap}.
\newblock CRC press.

\bibitem[Hall, 1992]{Hall1992bootstbook}
Hall, P. (1992).
\newblock {\em The bootstrap and Edgeworth expansion}.
\newblock Springer.

\bibitem[Holmes and Reinert, 2004]{HolmesReinert2004Stein}
Holmes, S. and Reinert, G. (2004).
\newblock Stein's method for the bootstrap.
\newblock In {\em Stein's Method}, pages 93--132. Institute of Mathematical
  Statistics.

\bibitem[Holmquist, 1988]{Holmquist1988moments}
Holmquist, B. (1988).
\newblock Moments and cumulants of the multivariate normal distribution.
\newblock {\em Stochastic Analysis and Applications}, 6(3):273--278.

\bibitem[Horowitz, 2001]{Horowitz2001bootstrapHandbook}
Horowitz, J.~L. (2001).
\newblock {The bootstrap}.
\newblock {\em Handbook of econometrics}, 5:3159--3228.

\bibitem[Hsu et~al., 2012]{Hsu2012tail}
Hsu, D., Kakade, S.~M., and Zhang, T. (2012).
\newblock A tail inequality for quadratic forms of subgaussian random vectors.
\newblock {\em Electron. Commun. Probab}, 17(52):1--6.

\bibitem[Ibragimov, 1966]{Ibragimov1966accuracy}
Ibragimov, I.~A. (1966).
\newblock On the accuracy of {G}aussian approximation to the distribution
  functions of sums of independent variables.
\newblock {\em Theory of Probability \& Its Applications}, 11(4):559--579.

\bibitem[Janssen and Pauls, 2003]{Janssen2003bootstrap}
Janssen, A. and Pauls, T. (2003).
\newblock How do bootstrap and permutation tests work?
\newblock {\em The Annals of Statistics}, 31(3):768--806.

\bibitem[Liu, 1988]{Liu1988bootstr}
Liu, R.~Y. (1988).
\newblock {Bootstrap Procedures under some Non-I.I.D. Models}.
\newblock {\em The Annals of Statistics}, 16(4):1696--1708.

\bibitem[Lo{\`e}ve, 1950]{Loeve1950fundamental}
Lo{\`e}ve, M. (1950).
\newblock Fundamental limit theorems of probability theory.
\newblock {\em The Annals of Mathematical Statistics}, 21(3):321--338.

\bibitem[Mammen, 1989]{Mammen1989asymptotics}
Mammen, E. (1989).
\newblock Asymptotics with increasing dimension for robust regression with
  applications to the bootstrap.
\newblock {\em The Annals of Statistics}, pages 382--400.

\bibitem[Mammen, 1992]{Mammen1992does}
Mammen, E. (1992).
\newblock {\em When does bootstrap work?}, volume~77.
\newblock Springer.

\bibitem[Mammen, 1993]{Mammen1993bootstrap}
Mammen, E. (1993).
\newblock {Bootstrap and wild bootstrap for high dimensional linear models}.
\newblock {\em The Annals of Statistics}, 21(1):255--285.

\bibitem[Paulauskas, 1975]{Paulauskas1975estimate}
Paulauskas, V. (1975).
\newblock An estimate of the remainder term in the multidimensional central
  limit theorem.
\newblock {\em Lithuanian Mathematical Journal}, 15(3):484--493.

\bibitem[Pr{\ae}stgaard, 1990]{Praestgaard1990bootstrap}
Pr{\ae}stgaard, J. (1990).
\newblock {Bootstrap with general weights and multiplier central limit
  theorems}.
\newblock {\em Technical Report195, Department of Statistics, University of
  Washington}.

\bibitem[Pr{\ae}stgaard and Wellner, 1993]{PraestgaardWellner1993exchangeably}
Pr{\ae}stgaard, J. and Wellner, J.~A. (1993).
\newblock Exchangeably weighted bootstraps of the general empirical process.
\newblock {\em The Annals of Probability}, pages 2053--2086.

\bibitem[Rotar', 1978]{Rotar1978non}
Rotar', V. (1978).
\newblock Non-classical estimates of the rate of convergence in the
  multi-dimensional central limit theorem. {I}.
\newblock {\em Theory of Probability \& Its Applications}, 22(4):755--772.

\bibitem[Schwartz, 1980]{Schwartz1980fast}
Schwartz, J.~T. (1980).
\newblock Fast probabilistic algorithms for verification of polynomial
  identities.
\newblock {\em Journal of the ACM (JACM)}, 27(4):701--717.

\bibitem[Shao and Tu, 1995]{Shao1995jackknife}
Shao, J. and Tu, D. (1995).
\newblock {\em The jackknife and bootstrap}.
\newblock Springer.

\bibitem[Singh, 1981]{Singh1981asymptotic}
Singh, K. (1981).
\newblock On the asymptotic accuracy of {E}fron's bootstrap.
\newblock {\em The Annals of Statistics}, pages 1187--1195.

\bibitem[Spokoiny and Zhilova, 2015]{SpZh2014PMB}
Spokoiny, V. and Zhilova, M. (2015).
\newblock Bootstrap confidence sets under model misspecification.
\newblock {\em The Annals of Statistics}, 43(6):2653--2675.

\bibitem[Ul'yanov, 1979]{Ul1979more}
Ul'yanov, V.~V. (1979).
\newblock On more precise convergence rate estimates in the central limit
  theorem.
\newblock {\em Theory of Probability \& Its Applications}, 23(3):660--663.

\bibitem[van~der Vaart and Wellner, 1996]{VaartWellner1996weak}
van~der Vaart, A.~W. and Wellner, J.~A. (1996).
\newblock {\em Weak Convergence and Empirical processes}.
\newblock Springer, New York.

\bibitem[Wu, 1986]{Wu1986wildboot}
Wu, C. F.~J. (1986).
\newblock Jackknife, bootstrap and other resampling methods in regression
  analysis.
\newblock {\em The Annals of Statistics}, 14(4):1261--1295+.

\bibitem[Zhai, 2018]{Zhai2016multivariate}
Zhai, A. (2018).
\newblock A high-dimensional {CLT} in $\mathcal{W}_{2}$-distance with near
  optimal convergence rate.
\newblock {\em Probability Theory and Related Fields}, 170(3):821--845.

\bibitem[Zhilova, 2015]{Zh2015UMB}
Zhilova, M. (2015).
\newblock Simultaneous likelihood-based bootstrap confidence sets for a large
  number of models.
\newblock {\em arXiv:1506.05779}.

\bibitem[Zhilova, 2020]{Zhilova2016supp}
Zhilova, M. (2020).
\newblock Supplement to ``{N}onclassical {B}erry--{E}sseen inequalities and
  accuracy of the bootstrap".

\bibitem[Zippel, 1979]{Zippel1979probabilistic}
Zippel, R. (1979).
\newblock Probabilistic algorithms for sparse polynomials.
\newblock {\em Symbolic and algebraic computation}, pages 216--226.

\bibitem[Zolotarev, 1965]{Zolotarev1965closeness}
Zolotarev, V.~M. (1965).
\newblock On the closeness of the distributions of two sums of independent
  random variables.
\newblock {\em Theory of Probability \& Its Applications}, 10(3):472--479.

\end{thebibliography}

\end{document}